\numberwithin{equation}{section}
\definecolor{Arancio}{cmyk}{0,0.61,0.87,0}
\definecolor{blus}{RGB}{0,102,204}
\newcommand{\brd}[1]{\mathbb{#1}}
\newcommand{\R}{\brd{R}}
\newcommand{\be}{\begin{equation}}
\newcommand{\ee}{\end{equation}}
\newcommand{\loc}{{\text{\tiny{loc}}}}
\newtheorem{teo}{Theorem}[section]
\newtheorem{Corollary}[teo]{Corollary}
\newtheorem{Lemma}[teo]{Lemma}
\newtheorem{Theorem}[teo]{Theorem}
\newtheorem{Proposition}[teo]{Proposition}
\theoremstyle{definition}
\newtheorem{remark}[teo]{Remark}
\begin{document}

\subjclass[2020] {35B65, 35B44, 35B45, 35B53, 35B30, 35B08}
\keywords{Schauder regularity estimates; Liouville Theorems}

\title[Notes on Schauder estimates by scaling for elliptic PDEs]
{Notes on Schauder estimates by scaling for second order linear elliptic PDEs in divergence form}
\date{\today}

\author{Stefano Vita}

\address[S. Vita]{Dipartimento di Matematica "F. Casorati"
\newline\indent
Universit\`a di Pavia
\newline\indent
Via Ferrata 5, 27100, Pavia, Italy}
\email{stefano.vita@unipv.it}

\maketitle


\begin{abstract}
These are the notes of a part of the PhD course \emph{Regularity for free boundary problems and for elliptic PDEs}, held in Pavia in the spring of 2025. The aim is to provide a comprehensive and self-contained treatment of classical interior and local Schauder estimates for second-order linear elliptic PDEs in divergence form via scaling in the spirit of Simon's work. The main techniques presented here are geometric in nature and were primarily developed in the study of geometric problems such as minimal surfaces. The adopted approach relies on compactness and blow-up arguments, combined with rigidity results (Liouville theorems), and shares many features with the one used in the study of free boundary problems, which was the main topic of the other part of the PhD course.
\end{abstract}

\section{Introduction}
In these notes we are concerned with the local regularity theory for weak solutions to
\begin{equation}\label{E}
-\mathrm{div}(A\nabla u)=f+\mathrm{div}F,\qquad \mathrm{in \ } B_1.
\end{equation}
Here $n\geq 2$ is the space dimension, $B_1=\{x\in \mathbb R^n \, : \, |x|<1\}$ is the unit ball centered at $0$, $u: B_1\to\mathbb R$ is the solution, $f: B_1\to\mathbb R$ is the forcing term, $F=(F_1,...,F_n)$ with $F_i: B_1\to\R$ is a field term and $A=(a_{ij})_{i,j=1,...,n}$ with $a_{ij}: B_1\to\mathbb R$ is the variable coefficient matrix (not necessarily symmetric). In particular the matrix is uniformly elliptic; that is, there exist two constants $0<\lambda\leq\Lambda$ with
\begin{equation}\label{UE}
\lambda |\xi|^2\leq A(x)\xi\cdot\xi\leq\Lambda |\xi|^2,\qquad \mathrm{for \ any \ }x\in B_1, \, \xi\in\mathbb R^n.
\end{equation}

The existence theory for PDEs is set in Sobolev spaces, whose topology is rich enough to allow for minimization of energy functionals. Roughly speaking, $C^k$ spaces are too small to allow an existence theory. However, once the solutions are provided to exist in a weak sense, one would like to promote them to be classic. In the present case of second order equations classic solution means that the partial derivatives up to order two are well defined and the equation is satisfied pointwise.


By interior local regularity, we mean that if the equation holds and some integrability or regularity assumptions on the data are satisfied in the ball $B_1$, then regularity estimates can be obtained for general weak solutions in the smaller ball $B_{1/2}$.

\begin{remark}
There is nothing special about $B_1$ and $B_{1/2}$, which are chosen for the sake of simplicity. The local regularity theory in these notes can be extended to equations in general domains $\Omega\subset\R^n$. The local interior estimates can be obtained in compact subsets $\Omega'\subset\subset\Omega$. This is done by scaling the estimates in the balls and by standard covering arguments of the compact set $\Omega'$. We would like to remark also that some estimates in the present notes could be possibly provided in a scale invariant form. However, this is not a target of the present course, and we will always focus on the local and mainly qualitative information that the estimates imply.
\end{remark}

In these notes, we propose a scheme to derive sharp local (and interior) Schauder estimates based on a regularization-approximation method and blow-up techniques, following Simon's approach in \cite{Sim97}. The main techniques presented here are geometric in nature and were primarily developed in the study of geometric problems such as minimal surfaces and have applications in the study of free boundary problems.

The scheme can be summarized as follows.
In Section \ref{s:2} we introduce the H\"older spaces, the notion of weak solutions and we prove the Caccioppoli inequality. In Section \ref{s:3} we prove $H^2$ estimates using the difference quotients technique by Nirenberg \cite{Nir55} (see also \cite{LioMag72}). Then, we iterate the results on derivatives obtaining $H^k$ estimates for any $k\geq2$. These results imply that weak solutions to \eqref{E} with smooth data, are locally smooth. In Section \ref{s:4} we prove the classic polynomial Liouville theorem for entire harmonic functions, and we obtain local $L^\infty$ bounds for weak solutions with bounded measurable coefficients following the De Giorgi approach \cite{DeG57} (see also \cite{Nas57,Nas58,Mos60,FerRos22,Vas16}). In Section \ref{s:5} we provide a priori $C^{0,\alpha}$ estimates when the coefficients are continuous using a contradiction argument which involves scaling and blow-up procedures in the spirit of Simon's work \cite{Sim97} (see also \cite{GilTru77,HanLin97}). Then, we provide a priori $C^{1,\alpha}$ estimates when the coefficients are $C^{0,\alpha}$ following a similar argument (see \cite{SoaTer19}). In Section \ref{s:6} we imply a posteriori $C^{0,\alpha}$ and $C^{1,\alpha}$ estimates for weak solutions by a regularization-approximation scheme involving convolution of the data with standard mollifiers. Finally, we iterate the $C^{1,\alpha}$ estimate on derivatives obtaining $C^{k,\alpha}$ estimates for any $k\geq 2$.

\begin{remark}
As we will see, the presence of the field term $F$ in the equation \eqref{E} allows us to get general $C^{k,\alpha}$ estimates just iterating a $C^{1,\alpha}$ estimate. Most of the references on Schauder estimates avoid the field term but then need to prove a $C^{2,\alpha}$ estimate for the equation with a $C^{0,\alpha}$ forcing term instead. For the sake of simplicity we decided not to add other lower order terms such as zero order potential terms $Vu$ and first order drift terms $b\cdot\nabla u$. We leave this generalization to the reader.
\end{remark}

Finally, we would like to link the techniques and the results in these notes with free boundary problems, such as obstacle, one phase or two phase problems. On one hand, as already mentioned, our approach to obtaining Schauder estimates relies on compactness and blow-up arguments. This methodology is also central in the analysis of qualitative properties and regularity of solutions of free boundary problems near the free interface. The regularity and measure-theoretic structure of the free boundary itself are typically investigated using similar tools. Regarding the regularity, a common strategy involves first establishing a form of flatness for the regular part of the free boundary, which then implies its Lipschitz continuity and then $C^{1,\alpha}$ regularity. Once this partial regularity is achieved, one can further refine the analysis, often through a bootstrap argument employing Schauder-type estimates, to prove that the regular free boundary is in fact smooth, or even real analytic. We would like to acknowledge some works by S. Salsa and collaborators where this approach has been effectively implemented \cite{FerSal07,FerSal14,DeSFerSal17,DeSFerSal19}.

\section{H\"older spaces, weak solutions, Caccioppoli inequality}\label{s:2}

\subsection{H\"older spaces}
Given $\alpha\in(0,1]$, $C^{0,\alpha}(B_1)$ consists of $C^0(\overline{B_1})$ functions (then uniformly continuous in $B_1$) such that the seminorm
\begin{equation*}
[u]_{C^{0,\alpha}(B_1)}=\sup_{\substack{x,y\in B_1 \\ x\neq y}}\frac{|u(x)-u(y)|}{|x-y|^\alpha}<\infty.
\end{equation*}
The $C^{0,\alpha}$-norm is defined as
\begin{equation*}
\|u\|_{C^{0,\alpha}(B_1)}=\|u\|_{L^\infty(B_1)}+[u]_{C^{0,\alpha}(B_1)}.
\end{equation*}
Notice that $\alpha=1$ corresponds to Lipschitz continuous functions. Given $k\in\mathbb N$, $\alpha\in(0,1]$, $C^{k,\alpha}(B_1)$ consists of $C^k(\overline{B_1})$ functions (then partial derivatives up to order $k$ are uniformly continuous in $B_1$) such that the seminorm
\begin{equation*}
[D^\beta u]_{C^{0,\alpha}(B_1)}<\infty,
\end{equation*}
where $\beta=(\beta_1,...,\beta_n)\in\mathbb N^n$ is any multiindex with $|\beta|=\sum_{i=1}^{n}\beta_i=k$.
The $C^{0,\alpha}$-norm is defined as
\begin{equation*}
\|u\|_{C^{k,\alpha}(B_1)}=\sum_{i=0}^k\sum_{|\beta|=i}\|D^\beta u\|_{L^\infty(B_1)}+\sum_{|\beta|=k}[D^\beta u]_{C^{0,\alpha}(B_1)}.
\end{equation*}
For simplicity, we will indicate by $D^ku$ a generic partial derivative of order $k$; that is, $D^\beta u$ with $|\beta|=k$.

\begin{remark}
It is easy to see that $C^{k,\alpha}(B_1)=C^{k,\alpha}(\overline{B_1})$ (when $\alpha\in(0,1]$) since the uniform continuity on a set or on its topological closure are equivalent. Moreover, one has
\begin{equation*}
C^0(\overline{B_1}) \supset C^{0,\alpha}(B_1) \supset C^{0,1}(B_1) \supset C^1(\overline{B_1}) \supset C^{1,\alpha}(B_1) \supset ... \supset C^\infty(\overline{B_1}).
\end{equation*}
\end{remark}

\subsection{Weak solutions}
A weak solution of \eqref{E} in $B_1$ is a function $u\in H^1(B_1)$ such that
\begin{equation*}
\int_{B_1}A\nabla u\cdot\nabla\phi=\int_{B_1}f\phi-\int_{B_1}F\cdot\nabla\phi\qquad\mathrm{for \ any \ }\phi\in H^1_0(B_1).
\end{equation*}
By density one can equivalently test the above equation against any $\phi\in C^\infty_c(B_1)$. Let us recall here the uniform ellipticity conditions in \eqref{UE}. Since we are always assuming that coefficients are bounded measurable; that is, their $L^\infty$ norm is bounded, we will assume the existence of $L>0$ such that
\begin{equation}\label{UE2}
\|A\|_{L^\infty(B_1)}\leq L.
\end{equation}
Notice that the bound from above in \eqref{UE} (the one involving $\Lambda$) is implied by the strongest condition \eqref{UE2}, since for any $\xi_1,\xi_2\in\R^n$
\begin{equation}\label{bilinear}
A\xi_1\cdot\xi_2\leq |A\xi_1||\xi_2|\leq \|A\|_{op}|\xi_1||\xi_2|\leq n\|A\|_{L^\infty(B_1)}|\xi_1||\xi_2|,
\end{equation}
where $\|A\|_{op}=\sup_{|\xi|=1}|A\xi|$. We also remark that in case of symmetric coefficients, the upper bound in \eqref{UE} implies \eqref{bilinear} without assuming  \eqref{UE2}, since $\|A\|_{op}=\Lambda$. We say that a constant $C_0>0$ is universal in $B_1$ if it depends only on the dimension $n$ and on the ellipticity constants $\lambda,\Lambda,L$ in $B_1$.

\subsection{Caccioppoli inequality}
The following inequality is a main tool for the regularity estimates.
\begin{Proposition}[Caccioppoli inequality]\label{p:caccioppoli}
Let $0<r<R\leq1$. Then there exists a universal constant $C>0$ in $B_1$ such that
\begin{equation*}
\|\nabla u\|_{L^2(B_r)}\leq C\left(\frac{1}{R-r}\|u\|_{L^2(B_R)}+\|f\|_{L^2(B_R)}+\|F\|_{L^2(B_R)}\right)
\end{equation*}
for any weak solution $u$ to \eqref{E} in $B_1$.
\end{Proposition}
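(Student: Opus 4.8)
The plan is to use the standard energy-method argument: test the weak formulation against $\phi = \eta^2 u$, where $\eta$ is a suitable cutoff function, and exploit uniform ellipticity to absorb terms.

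First I would fix a smooth cutoff $\eta \in C_c^\infty(B_R)$ with $0 \le \eta \le 1$, $\eta \equiv 1$ on $B_r$, and $|\nabla \eta| \le C/(R-r)$ — such $\eta$ exists by a routine mollification of the distance function. Then $\phi = \eta^2 u \in H_0^1(B_1)$ is an admissible test function (noting $u \in H^1(B_1)$, hence $\eta^2 u \in H_0^1(B_R) \subset H_0^1(B_1)$), with $\nabla\phi = \eta^2 \nabla u + 2\eta u \nabla\eta$. Plugging in yields
\begin{equation*}
\int_{B_1} \eta^2 A\nabla u \cdot \nabla u = -2\int_{B_1} \eta u\, A\nabla u \cdot \nabla\eta + \int_{B_1} f \eta^2 u - \int_{B_1} F \cdot (\eta^2 \nabla u + 2\eta u \nabla\eta).
\end{equation*}
The left-hand side is bounded below by $\lambda \int_{B_1} \eta^2 |\nabla u|^2$ using the lower bound in \eqref{UE}.

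Next I would estimate each term on the right using \eqref{bilinear} (so that the off-diagonal coefficient terms are controlled by $n L |\nabla u||\nabla\eta||u|$ etc.) together with Cauchy--Schwarz and Young's inequality $ab \le \delta a^2 + \frac{1}{4\delta} b^2$, choosing $\delta$ small depending only on $\lambda, n, L$. For instance, $2\int \eta |u|\, |A\nabla u||\nabla\eta| \le 2nL \int \eta|\nabla u|\,|u||\nabla\eta| \le \delta \int \eta^2|\nabla u|^2 + \frac{(nL)^2}{\delta}\int |u|^2|\nabla\eta|^2$, and similarly for the terms involving $f$ and $F$ (for $\int f\eta^2 u$ one can simply bound $\eta \le 1$ and use Young's inequality on $|f||u|$, or keep $\eta^2$; either works). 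The gradient terms with small coefficient $\delta$ get absorbed into the left-hand side; what remains is controlled by $\int_{B_R} |u|^2 |\nabla\eta|^2 + \int_{B_R}(|f|^2 + |u|^2 + |F|^2)$. Using $|\nabla\eta| \le C/(R-r)$ and $\eta \equiv 1$ on $B_r$ gives
\begin{equation*}
\lambda \int_{B_r} |\nabla u|^2 \le C\left( \frac{1}{(R-r)^2}\int_{B_R}|u|^2 + \int_{B_R}|f|^2 + \int_{B_R}|F|^2 \right),
\end{equation*}
and taking square roots (using $\sqrt{a+b+c} \le \sqrt a + \sqrt b + \sqrt c$, and noting $R - r \le 1$ to combine the $\int |u|^2$ term arising from Young on $\int f \eta^2 u$ with the cutoff term) delivers the stated inequality with a universal constant.

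I do not expect a genuine obstacle here; this is a classical computation. The only points requiring a little care are: verifying that $\eta^2 u$ is a legitimate test function in $H_0^1(B_1)$ (immediate from $u \in H^1$ and $\eta$ smooth compactly supported), and bookkeeping the constant to confirm it depends only on $n, \lambda, \Lambda, L$ — in particular that we only ever use the lower ellipticity bound from \eqref{UE} and the $L^\infty$ bound \eqref{UE2} (via \eqref{bilinear}), never anything about symmetry or continuity of $A$. A minor subtlety is that the term $\int f \eta^2 u$ contributes an $\|u\|_{L^2(B_R)}$ on the right with an $O(1)$ coefficient rather than an $O(1/(R-r))$ one, but since $R - r \le 1$ this is harmless and gets folded into the $\frac{1}{R-r}\|u\|_{L^2(B_R)}$ term.
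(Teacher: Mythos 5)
Your proposal is correct and follows essentially the same energy-method argument as the paper's: test against $\eta^2 u$, use the uniform ellipticity lower bound and Young's inequality to absorb the gradient terms, and take square roots. The only cosmetic difference is that the paper first rewrites the quadratic form in terms of $\nabla(\eta u)$ (so the left-hand side becomes $\int A\nabla(\eta u)\cdot\nabla(\eta u)$), whereas you keep $\int \eta^2 A\nabla u\cdot\nabla u$ directly; both choices lead to the same estimate with the same bookkeeping, including the observation that $R-r\le 1$ lets the $O(1)\,\|u\|_{L^2}^2$ term from the forcing be absorbed into the $\frac{1}{(R-r)^2}\|u\|_{L^2}^2$ term.
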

\begin{proof}
Let $\eta\in C^\infty_c(B_R)$ with $0\leq\eta\leq1$, radially decreasing cut-off function with $\eta=1$ in $B_r$. Such a function can be chosen such that $|\nabla\eta|\leq2(R-r)^{-1}$. We test \eqref{E} with $\eta^2u\in H^1_0(B_1)$; that is,
\begin{equation*}
\int_{B_1}A\nabla u\cdot\nabla(\eta^2u)=\int_{B_1}f\eta^2u-F\cdot\nabla(\eta^2u).
\end{equation*}
Then
\begin{eqnarray*}
A\nabla u\cdot\nabla(\eta^2u)&=&\eta A\nabla u\cdot\nabla(\eta u)+\eta u A\nabla u\cdot\nabla\eta\\
&=&A\nabla(\eta u)\cdot\nabla(\eta u)-uA\nabla\eta\cdot\nabla(\eta u)+uA\nabla(\eta u)\cdot\nabla\eta-u^2A\nabla\eta\cdot\nabla\eta.
\end{eqnarray*}
Hence
\begin{eqnarray*}
\int_{B_1}A\nabla(\eta u)\cdot\nabla(\eta u)&\leq&\int_{B_1}|uA\nabla\eta\cdot\nabla(\eta u)|+\int_{B_1}|uA\nabla(\eta u)\cdot\nabla\eta|+\int_{B_1}|u^2A\nabla\eta\cdot\nabla\eta|\\
&&+\int_{B_1}|f\eta^2 u|+\int_{B_1}|\eta F\cdot\nabla(\eta u)|+\int_{B_1}|\eta u F\cdot\nabla\eta|.
\end{eqnarray*}
By the Young inequality with a chosen $\varepsilon>0$ to be announced, \eqref{UE} and \eqref{UE2}, we get
\begin{eqnarray*}
\lambda\int_{B_1}|\nabla(\eta u)|^2&\leq&\varepsilon\int_{B_1}|\nabla(\eta u)|^2+\frac{n^2L^2}{\varepsilon}\int_{B_1}u^2|\nabla\eta|^2+\Lambda\int_{B_1}u^2|\nabla\eta|^2+\frac{1}{2}\|f\|_{L^2(B_R)}^2\\
&&+\frac{1}{2}\|u\|_{L^2(B_R)}^2+\frac{1}{2}\left(1+\frac{1}{\varepsilon}\right)\|F\|_{L^2(B_R)}^2+\frac{\varepsilon}{2}\int_{B_1}|\nabla(\eta u)|^2+\frac{1}{2}\int_{B_1}u^2|\nabla\eta|^2.
\end{eqnarray*}
Then, using $|\nabla\eta|\leq2(R-r)^{-1}$ and $\eta=1$ in $B_r$, there exists a universal constant $C>0$ such that
\begin{equation*}
\left(\lambda-\varepsilon-\frac{\varepsilon}{2}\right)\int_{B_r}|\nabla u|^2\leq C\left(\frac{1}{R-r}\|u\|_{L^2(B_R)}+\|f\|_{L^2(B_R)}+\|F\|_{L^2(B_R)}\right)^2.
\end{equation*}
The result follows by choosing $\varepsilon=\lambda/2$ and taking the square roots in the above inequality. We remark that the constant depends on the ellipticity ratio $\Lambda/\lambda$ and also on $nL/\Lambda$. The latter can be chosen to be $1$ in the symmetric case. 
\end{proof}

In the result above we considered the equation \eqref{E} satisfied in $B_1$ for simplicity. The same result holds if the equation is satisfied in a ball $B_{\overline R}$ and considering $0<r<R\leq\overline R$ but with a constant that depends on $\max\{1,R\}$ too in case of nontrivial right hand sides.

\section{$H^2$ estimates, $H^k$ estimates, smooth data imply smooth solutions}\label{s:3}

\subsection{$H^2$ estimates}
In this section we revisit the classic $H^2$ interior regularity estimate for weak solutions to
\begin{equation}\label{E2}
-\mathrm{div}(A\nabla u)=\mathrm{div}F\qquad\mathrm{in \ }B_1.
\end{equation}
Let us recall that, given $k\geq1$ the space $H^k(B_1)$ stands for $W^{k,2}(B_1)$. This means that the weak partial derivatives $D^ju\in L^2(B_1)$ for any $j=0,...,k$. For simplicity (and this is sufficient for our purposes) we do not deal with forcing terms in the following result.
\begin{Theorem}[$H^2$ estimates]\label{t:H2}
Let $A\in C^{0,1}(B_1)$ with $\|A\|_{C^{0,1}(B_1)}\leq \overline L$, $F\in H^1(B_1)$ and $u\in H^1(B_1)$ be a weak solution to \eqref{E2}. Then $u\in H^2(B_{1/2})$ and there exists a constant $C>0$ depending only on $n$, the ellipticity constants and $\overline L$ such that
\begin{equation*}
\|u\|_{H^2(B_{1/2})}\leq C(\|u\|_{L^2(B_1)}+\|F\|_{H^1(B_1)}).
\end{equation*}
\end{Theorem}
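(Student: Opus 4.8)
The plan is to use the difference quotient technique of Nirenberg. Fix a direction $e_k$ and, for small $h$, denote by $\tau_h v(x)=v(x+he_k)$ the translation and $\Delta_h v=(\tau_h v-v)/h$ the difference quotient. The strategy is to show that $\Delta_h \nabla u$ is bounded in $L^2$ of a slightly smaller ball, uniformly in $h$, which by the standard characterization of $H^1$ via difference quotients yields $\partial_k \nabla u \in L^2$, hence $u\in H^2$ locally, together with the quantitative bound. To produce such a uniform bound, I would first write down the equation solved by $\Delta_h u$: applying $\Delta_h$ to the weak formulation of \eqref{E2} and using the discrete Leibniz rule $\Delta_h(fg)(x)=\tau_h f(x)\,\Delta_h g(x)+\Delta_h f(x)\,g(x)$, one finds that $\Delta_h u$ is a weak solution of
\begin{equation*}
-\mathrm{div}\big(\tau_h A\,\nabla \Delta_h u\big)=\mathrm{div}\big((\Delta_h A)\nabla u+\Delta_h F\big)\qquad\text{in }B_{1-|h|}.
\end{equation*}
Here the point is that $\tau_h A$ is still uniformly elliptic with the same constants, $\|\Delta_h A\|_{L^\infty}\le \|A\|_{C^{0,1}}\le \overline L$ by Lipschitz continuity of $A$, and $\|\Delta_h F\|_{L^2(B_r)}\le \|\nabla F\|_{L^2(B_1)}$ uniformly in $h$ for $r$ bounded away from $1$.

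Next I would apply the Caccioppoli inequality (Proposition \ref{p:caccioppoli}), in the form valid for equations with a field term, to the solution $v=\Delta_h u$ on a pair of intermediate radii, say $1/2<r<R<1$ with $R$ fixed (e.g. $R=3/4$), to get
\begin{equation*}
\|\nabla \Delta_h u\|_{L^2(B_r)}\le C\Big(\|\Delta_h u\|_{L^2(B_R)}+\|(\Delta_h A)\nabla u\|_{L^2(B_R)}+\|\Delta_h F\|_{L^2(B_R)}\Big),
\end{equation*}
with $C$ depending only on the ellipticity constants (the coefficient matrix being $\tau_h A$, whose relevant norms are controlled by those of $A$). The three terms on the right are then estimated uniformly in $h$: the first by $\|\Delta_h u\|_{L^2(B_R)}\le \|\nabla u\|_{L^2(B_{R+|h|})}\le \|\nabla u\|_{L^2(B_{7/8})}$, the second by $\overline L\,\|\nabla u\|_{L^2(B_{3/4})}$, the third by $\|\nabla F\|_{L^2(B_1)}$. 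Finally $\|\nabla u\|_{L^2(B_{7/8})}$ is itself bounded by $C(\|u\|_{L^2(B_1)}+\|F\|_{L^2(B_1)})$ via one more application of Caccioppoli on $B_{7/8}\subset B_1$. Collecting, $\|\nabla\Delta_h u\|_{L^2(B_{1/2})}\le C(\|u\|_{L^2(B_1)}+\|F\|_{H^1(B_1)})$ uniformly in $h$; letting $h\to0$ gives $\partial_k\nabla u\in L^2(B_{1/2})$ with the same bound, and ranging over $k=1,\dots,n$ completes the estimate after combining with the already-established $\|u\|_{H^1(B_{1/2})}$ bound.

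The main technical obstacle is the careful bookkeeping around the difference quotients near the boundary of $B_1$: one must work on nested balls and ensure all translations $\tau_{\pm h}$ keep the relevant points inside $B_1$, so the radii have to be chosen with a fixed margin and $|h|$ taken small accordingly; this is where the shrink from $B_1$ to $B_{1/2}$ is spent. A secondary point requiring care is the justification that $\Delta_h u$ is an \emph{admissible} test-function manipulation — i.e.\ that one may legitimately apply $\Delta_h$ to the weak formulation and obtain the displayed equation for $\Delta_h u$; this follows by testing \eqref{E2} against $\Delta_{-h}\phi$ for $\phi\in C_c^\infty$ and using the discrete integration-by-parts identity $\int \Delta_{-h} g\cdot \psi=-\int g\cdot\Delta_h\psi$, together with the discrete Leibniz rule, all of which are routine once set up. Everything else is a direct invocation of Caccioppoli and the difference-quotient characterization of Sobolev spaces.
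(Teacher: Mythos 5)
Your proposal follows exactly the paper's strategy: derive the rescaled equation satisfied by the difference quotient $\Delta_h u$ (the paper obtains it by subtracting the weak formulations tested against $\phi$ and its translate, you via the discrete Leibniz rule, but these are the same identity), apply Caccioppoli to that equation, control the resulting terms uniformly in $h$ using the Lipschitz bound on $A$, the $H^1$ bound on $F$, and Caccioppoli for $u$ itself, and then pass to the limit $h\to 0$ via the difference-quotient characterization of Sobolev functions. The argument is correct and matches the paper in all essentials.
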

In order to prove the above result, we make use of the difference quotients technique introduced by Nirenberg \cite{Nir55}. The incremental quotient of step $h\neq0$ and direction $e_j$ with $j\in\{1,...,n\}$ is given by
\begin{equation*}
D_j^hu(x):=\frac{u(x+he_j)-u(x)}{h}.
\end{equation*}
The following Lemma states the main properties of the incremental quotients and the proof is omitted (see for instance \cite[Section 7.11]{GilTru77}).
\begin{Lemma}\label{l:difference}
Let $u\in H^1(B_R)$, $0<r<R$, $0<|h|<R-r$, $i,j\in\{1,...,n\}$. Then
\begin{itemize}
\item[(i)] $\|D^h_ju\|_{L^2(B_r)}\leq \frac{2}{|h|}\|u\|_{L^2(B_R)};$
\item[(ii)] For any $\phi\in C^\infty_c(B_r)$
$$\int_{B_R}D_j^hu\phi=-\int_{B_R}uD_j^{-h}\phi;$$
\item[(iii)] $\|D^h_ju\|_{L^2(B_r)}\leq \|\partial_j u\|_{L^2(B_R)};$
moreover, up to subsequences, $D^h_ju\rightharpoonup \partial_ju$ in $L^2(B_r)$;
\item[(iv)] $\partial_i (D^h_ju)=D^h_j(\partial_iu)$ and $D_j^{h}u\in H^1(B_r)$.
\end{itemize}
\end{Lemma}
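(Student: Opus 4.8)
The plan is to establish the four items by reducing everything to smooth functions and to the change of variables $y=x+he_j$, as in \cite[Section 7.11]{GilTru77}. Throughout, the hypothesis $0<|h|<R-r$ is used precisely to guarantee that the relevant translates stay inside $B_R$: if $x\in B_r$ then $|x+he_j|\le r+|h|<R$, so $B_r+he_j\subset B_R$. For (i), by the triangle inequality $\|D_j^h u\|_{L^2(B_r)}\le |h|^{-1}\big(\|u(\cdot+he_j)\|_{L^2(B_r)}+\|u\|_{L^2(B_r)}\big)$, and translation invariance of Lebesgue measure gives $\|u(\cdot+he_j)\|_{L^2(B_r)}=\|u\|_{L^2(B_r+he_j)}\le\|u\|_{L^2(B_R)}$, together with $\|u\|_{L^2(B_r)}\le\|u\|_{L^2(B_R)}$. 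For (ii), since $\operatorname{supp}\phi\subset B_r$ and its $\pm he_j$-translates lie in $B_R$, I split $\int_{B_R}D_j^h u\,\phi=h^{-1}\big(\int u(x+he_j)\phi(x)\,dx-\int u(x)\phi(x)\,dx\big)$ and substitute $y=x+he_j$ in the first integral, rewriting it as $\int u(y)\,\tfrac{\phi(y-he_j)-\phi(y)}{h}\,dy=-\int u(y)\,D_j^{-h}\phi(y)\,dy$.

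For the estimate in (iii), I first assume $u\in C^\infty(B_R)\cap H^1(B_R)$. The fundamental theorem of calculus gives $D_j^h u(x)=\int_0^1\partial_j u(x+she_j)\,ds$, and then Jensen's inequality together with Fubini yields
\[
\|D_j^h u\|_{L^2(B_r)}^2\le\int_0^1\int_{B_r}|\partial_j u(x+she_j)|^2\,dx\,ds\le\|\partial_j u\|_{L^2(B_R)}^2,
\]
using $B_r+she_j\subset B_R$ for all $s\in[0,1]$. For general $u\in H^1(B_R)$ I fix $h$, choose an intermediate radius $\rho$ with $r+|h|<\rho<R$, approximate $u$ in $H^1(B_\rho)$ by smooth functions (Meyers--Serrin), apply the smooth estimate on $B_r$ with $B_\rho$ replacing $B_R$, pass to the limit, and finally let $\rho\uparrow R$. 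The resulting bound shows $\{D_j^h u\}_{0<|h|<R-r}$ is bounded in $L^2(B_r)$, so along any $h_k\to0$ a subsequence converges weakly in $L^2(B_r)$ to some $v$. Pairing with $\phi\in C^\infty_c(B_r)$ and using (ii), $\int v\phi=\lim_k\int D_j^{h_k}u\,\phi=-\lim_k\int u\,D_j^{-h_k}\phi=-\int u\,\partial_j\phi=\int(\partial_j u)\phi$, since $D_j^{-h_k}\phi\to\partial_j\phi$ uniformly; hence $v=\partial_j u$.

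For (iv), for smooth $u$ the operators $\partial_i$ and $D_j^h$ plainly commute. In general, given $\phi\in C^\infty_c(B_r)$, I apply (ii) with the test function $\partial_i\phi$, then the smooth identity $D_j^{-h}(\partial_i\phi)=\partial_i(D_j^{-h}\phi)$, then the definition of the weak derivative $\partial_i u$ tested against $D_j^{-h}\phi\in C^\infty_c(B_R)$, and finally (ii) once more with $\partial_i u$ in place of $u$; this chain produces $\int_{B_r}(D_j^h u)\,\partial_i\phi=-\int_{B_r}D_j^h(\partial_i u)\,\phi$, i.e. the weak $\partial_i$-derivative of $D_j^h u$ is $D_j^h(\partial_i u)$, which belongs to $L^2(B_r)$ by (i) applied to $\partial_i u$. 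Hence $D_j^h u\in H^1(B_r)$.

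The only genuinely delicate point is the passage from smooth functions to general $H^1$ functions in (iii): since $u$ is defined only on $B_R$, one must insert the intermediate radius $\rho$, keep track that all shifts remain in the domain, and then recover the sharp radius by monotone convergence; together with the identification of the weak limit, which exploits the \emph{strong} (indeed uniform) convergence $D_j^{-h}\phi\to\partial_j\phi$ to pair against the merely weak convergence of $D_j^h u$, this is where the argument requires the most care. Everything else is bookkeeping with the change of variables $y=x+he_j$ and is why the detailed proof is customarily omitted.
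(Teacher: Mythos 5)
The paper omits the proof of this lemma entirely, referring the reader to \cite[Section 7.11]{GilTru77}, so there is no in-text argument to compare against. Your proof is correct and is essentially the standard textbook argument: the change of variables $y=x+he_j$ for (i)–(ii), the fundamental theorem of calculus plus Jensen/Fubini with a density argument for the bound in (iii), identification of the weak limit by pairing against the uniformly convergent $D_j^{-h}\phi$, and the discrete integration-by-parts identity (ii) applied twice (with the smooth commutation $D_j^{-h}\partial_i\phi=\partial_i D_j^{-h}\phi$ in between) for (iv). One minor remark: in (iii) the intermediate radius $\rho$ is harmless but not needed, since Meyers--Serrin already gives density of $C^\infty(B_R)\cap H^1(B_R)$ in $H^1(B_R)$, so one may approximate on $B_R$ directly.
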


\begin{proof}[Proof of Theorem \ref{t:H2}]
Let us consider $\phi\in C^\infty_c(B_{3/4})\subset C^\infty_c(B_{1})$ and test \eqref{E2} against $\phi$; that is, 
\begin{equation}\label{e:phi}
-\int_{\mathrm{supp}\phi\subset B_1}A(x)\nabla u(x)\cdot\nabla\phi(x)=\int_{\mathrm{supp}\phi\subset B_1}F(x)\cdot\nabla\phi(x).
\end{equation}

Then given $j\in\{1,...,n\}$ and $0<|h|<1/8$, let us consider $\phi(\cdot-he_j)$ which belongs to $C^\infty_c(B_{3/4}(he_j))\subset C^\infty_c(B_1)$. Then, we can test \eqref{E2} also against $\phi(\cdot-he_j)$; that is,
\begin{equation*}
-\int_{\mathrm{supp}\phi(\cdot-he_j)\subset B_1}A(x')\nabla u(x')\cdot\nabla\phi(x'-he_j)=\int_{\mathrm{supp}\phi(\cdot-he_j)\subset B_1}F(x')\cdot\nabla\phi(x'-he_j),
\end{equation*}
and after a change of variable $x=x'-he_j$, this leads to
\begin{equation}\label{e:phi2}
-\int_{\mathrm{supp}\phi\subset B_1}A(x+he_j)\nabla u(x+he_j)\cdot\nabla\phi(x)=\int_{\mathrm{supp}\phi\subset B_1}F(x+he_j)\cdot\nabla\phi(x).
\end{equation}
Subtracting \eqref{e:phi2} and \eqref{e:phi}, and dividing by $h$ we get
\begin{equation*}\label{e:incremental}
-\int_{B_1}A(x+he_j)D^h_j(\nabla u)(x)\cdot\nabla\phi(x)=\int_{B_1}D_j^hA(x)\nabla u(x)\cdot\nabla\phi(x)+\int_{B_1}D^h_jF(x)\cdot\nabla\phi(x).
\end{equation*}
Using that $D^h_j(\nabla u)=\nabla (D^h_ju)$ and that $D_j^hu\in H^1(B_{3/4})$ (point (iv) of Lemma \ref{l:difference}), the above formulation, which holds true for any $\phi\in C^\infty_c(B_{3/4})$ and for $0<|h|<1/8$ says that $D_j^hu$ weakly solves
\begin{equation*}
-\mathrm{div}(A(\cdot+he_j)\nabla (D_j^hu))=\mathrm{div}(D_j^hA\nabla u+D^h_jF)\qquad\mathrm{in \ }B_{3/4}.
\end{equation*}
Then, given $0<r<R\leq3/4$ the Caccioppoli inequality in Proposition \ref{p:caccioppoli} says that
\begin{equation*}
\|\nabla(D_j^hu)\|_{L^2(B_r)}\leq c\left(\frac{1}{R-r}\|D_j^hu\|_{L^2(B_R)}+\|D_j^hF+D_j^hA\nabla u\|_{L^2(B_R)}\right).
\end{equation*}
Then, point (iii) of Lemma \ref{l:difference}, together with the condition $|h|<1/8$ says that
\begin{equation*}
\|D_j^hu\|_{L^2(B_R)}\leq \|\partial_ju\|_{L^2(B_{7/8})},
\end{equation*}
which in turns is estimated by the Caccioppoli inequality on the equation for $u$ itself; that is,
\begin{equation*}
\|\partial_ju\|_{L^2(B_{7/8})}\leq \|\nabla u\|_{L^2(B_{7/8})}\leq C\left(\|u\|_{L^2(B_1)}+\|F\|_{L^2(B_1)}\right).
\end{equation*}
Moreover, using again point (iii) of Lemma \ref{l:difference} and the Caccioppoli inequality for $u$
\begin{eqnarray*}
\int_{B_R}|D_j^hF+D_j^hA\nabla u|^2&\leq&2\int_{B_R}|D_j^hF|^2+2\int_{B_R}|D_j^hA\nabla u|^2\\
&\leq&2\int_{B_1}|\partial_jF|^2+2n^2\sup_{x\in B_R}|D_j^hA|^2\int_{B_R}|\nabla u|^2\\
&\leq&2\int_{B_1}|\nabla F|^2+2n^2\overline L^2C(\|u\|_{L^2(B_1)}+\|F\|_{L^2(B_1)})^2
\end{eqnarray*}
We used the Lipschitz continuity of coefficients; that is, $|a_{ij}(x+he_j)-a_{ij}(x)|\leq \overline L |h|$. This allows us to infer the bound
\begin{equation*}
\|D_j^hF+D_j^hA\nabla u\|_{L^2(B_R)}\leq C(\|u\|_{L^2(B_1)}+\|F\|_{H^1(B_1)}).
\end{equation*}
Summing together the information obtained, we have the existence of a constant which depends on the bound on the $C^{0,1}$-norm of coefficients such that
\begin{equation*}
\|\nabla(D_j^hu)\|_{L^2(B_r)}\leq C\left(\|u\|_{L^2(B_1)}+\|F\|_{H^1(B_1)}\right).
\end{equation*}
Hence $\nabla(D^h_j u)=D_j^h(\nabla u)$ is uniformly bounded in $L^2(B_r)$ in $|h|<1/8$ (i.e. $D^h_j u$ is uniformly bounded in $H^1(B_r)$). Hence, it weakly converges in $L^2(B_r)$ to $\nabla(\partial_j u)=\partial_j(\nabla u)$ (using point (ii) of Lemma \ref{l:difference}). This gives the belonging to $H^2(B_{1/2})$ by choosing $r=1/2$, and the desired estimate using the lower semicontinuity of the $L^2$-norm with respect to the weak convergence
\begin{equation*}
\|\nabla(\partial_ju)\|_{L^2(B_r)}\leq \liminf_{h\to0}\|\nabla(D_j^hu)\|_{L^2(B_{1/2})}\leq C\left(\|u\|_{L^2(B_1)}+\|F\|_{H^1(B_1)}\right).
\end{equation*}
\end{proof}

\begin{remark}\label{r:H2}
From the proof of the previous result it is clear that having the equation satisfied on a ball $B_R$ the estimate is available on any smaller ball $B_r$ (i.e. $0<r<R$)
\begin{equation*}
\|u\|_{H^2(B_{r})}\leq C(\|u\|_{L^2(B_R)}+\|F\|_{H^1(B_R)}).
\end{equation*}
with a constant that may depend on both $R$ and $r$ and explodes if $R-r\to0$. More precisely, if $R\leq1$ then the constant $C=\overline C/(R-r)^2$ where $\overline C$ is universal in $B_1$.
\end{remark}

\begin{remark}[Scaling and covering]\label{r:scaling-covering}
Imagine to have proven an estimate $B_1\to B_r$ for solutions to \eqref{E2} in $B_1$ with $0<r<1$; that is, there exists a constant (which depends on $\|A\|_{C^{0,1}(B_1)}\leq \overline L$ and explodes as $1-r\to0$) such that
\begin{equation*}
\|u\|_{H^2(B_{r})}\leq C(\|u\|_{L^2(B_1)}+\|F\|_{H^1(B_1)}).
\end{equation*}
Then, from this one can get an estimate $B_1\to B_{R}$ with $0<r<R<1$. One can procede as follows. 
\begin{itemize}
\item[(i)] From the estimate $B_{1}\to B_{r}$ one can get an estimate $B_{t}(x_0)\to B_{t r}(x_0)$ for any $t\in(0,1)$ and any $x_0\in B_1$ such that $B_{t}(x_0)\subset B_1$. Such an estimate is $t$-dependent, and it is obtained by considering a given solution $u$ of \eqref{E2} in $B_{t}(x_0)$ and scaling it to $v(x)=u(x_0+t x)$, which is a solution in $B_1$ to 
\begin{equation*}
-\mathrm{div}(\tilde A\nabla v)=\mathrm{div} \tilde F\qquad \mathrm{in \ }B_1,
\end{equation*}
where $\tilde A(x)=A(x_0+t x)$, $\tilde F(x)=\lambda F(x_0+t x)$. Then, the estimate $B_1\to B_r$ says that there exists a constant (which depends on $\|\tilde A\|_{C^{0,1}(B_1)}\leq \|A\|_{C^{0,1}(B_1)} \leq \overline L$ and explodes as $1-r\to0$) such that
\begin{equation*}
\|v\|_{H^2(B_{r})}\leq C(\|v\|_{L^2(B_1)}+\|\tilde F\|_{H^1(B_1)}).
\end{equation*}
This gives
\begin{equation*}
\|u\|_{H^2(B_{t r}(x_0))}\leq \frac{C}{t^2}(\|u\|_{L^2(B_1)}+\|F\|_{H^1(B_1)}).
\end{equation*}
\item[(ii)] Then, in order to prove the estimate in $B_{1}\to B_{R}$ one can proceed by a covering argument. Choose $t>0$ small enough so that $t<1-R$. Then one can cover $\overline{B_{R}}$ with a finite number of balls of radius $t r$ centered at points of $\overline{B_{R}}$; that is,
\begin{equation*}
\overline{B_{R}}\subset\bigcup_{i=1}^NB_{tr}(x_i)\subset \bigcup_{i=1}^NB_{t}(x_i)\subset B_{1}.
\end{equation*}
\end{itemize}
\end{remark}

\begin{Corollary}\label{c:derivative}
Under the hypothesis of Theorem \ref{t:H2}, for any $i\in\{1,...,n\}$ and any fixed $0<r<1$ we have that $u_i=\partial_iu\in H^1(B_r)$ is a weak solution to 
\begin{equation*}
-\mathrm{div}(A\nabla u_i)=\mathrm{div}(\partial_i A\nabla u+\partial_iF) \qquad\mathrm{in \ } B_{r}.
\end{equation*}
\end{Corollary}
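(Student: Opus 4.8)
The plan is to differentiate the equation \eqref{E2} in the direction $e_i$, mimicking the difference quotient computation already carried out in the proof of Theorem \ref{t:H2} but now working with genuine (rather than discrete) derivatives, which is legitimate precisely because Theorem \ref{t:H2} guarantees $u \in H^2_{\loc}(B_1)$, hence $u_i = \partial_i u \in H^1(B_r)$ for every $r<1$. So first I would fix $0<r<1$, pick a slightly larger radius $R$ with $r<R<1$, and record that $u\in H^2(B_R)$ and $A\in C^{0,1}(B_1)$ together imply that $A\nabla u \in H^1(B_R)$ with $\partial_i(A\nabla u) = \partial_i A\,\nabla u + A\,\nabla u_i$ (a product rule valid since $A$ is Lipschitz and $\nabla u\in H^1$), so the right-hand side $\partial_i A\,\nabla u + \partial_i F$ lies in $L^2(B_r)$ and the putative equation makes sense weakly.

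Next I would verify the weak formulation. Take any $\phi\in C^\infty_c(B_r)$; then $\partial_i\phi\in C^\infty_c(B_r)\subset H^1_0(B_1)$ is an admissible test function in the weak formulation of \eqref{E2}, giving $\int_{B_1} A\nabla u\cdot\nabla(\partial_i\phi) = \int_{B_1} F\cdot\nabla(\partial_i\phi)$. Since all the relevant functions are in $H^1(B_r)$, I can integrate by parts in the $x_i$ variable on both sides — equivalently, commute $\partial_i$ with $\nabla$ and move the derivative off $\phi$ — to rewrite $\int A\nabla u\cdot\partial_i(\nabla\phi) = -\int \partial_i(A\nabla u)\cdot\nabla\phi = -\int(\partial_i A\,\nabla u + A\nabla u_i)\cdot\nabla\phi$ and similarly $\int F\cdot\partial_i(\nabla\phi) = -\int \partial_i F\cdot\nabla\phi$. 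Rearranging yields exactly
\begin{equation*}
\int_{B_r} A\nabla u_i\cdot\nabla\phi = -\int_{B_r}(\partial_i A\,\nabla u + \partial_i F)\cdot\nabla\phi\qquad\text{for all }\phi\in C^\infty_c(B_r),
\end{equation*}
which is the weak form of $-\mathrm{div}(A\nabla u_i) = \mathrm{div}(\partial_i A\,\nabla u + \partial_i F)$ in $B_r$. By density this extends to all $\phi\in H^1_0(B_r)$, completing the proof.

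Alternatively — and this is probably the cleanest way to present it — one can avoid redoing the integration by parts from scratch by reusing the difference quotient identity already established in the proof of Theorem \ref{t:H2}: there it was shown that $D_j^h u$ weakly solves $-\mathrm{div}(A(\cdot+he_j)\nabla(D_j^h u)) = \mathrm{div}(D_j^h A\,\nabla u + D_j^h F)$ in $B_{3/4}$, and one simply passes to the limit $h\to 0$ in each term of this weak formulation. The left-hand side converges because $\nabla(D_j^h u)\rightharpoonup \nabla u_j$ weakly in $L^2$ and $A(\cdot+he_j)\to A$ uniformly (Lipschitz continuity); on the right, $D_j^h F\to \partial_j F$ and $D_j^h A\to \partial_j A$ (the latter uniformly on compact sets, or in $L^2$ after noting $D_j^h A$ is uniformly bounded and converges a.e.) while $\nabla u\in L^2$ is fixed, so $D_j^h A\,\nabla u \to \partial_j A\,\nabla u$ in $L^2$.

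The main obstacle, such as it is, is purely a matter of bookkeeping: one must be careful that the product rule $\partial_i(A\nabla u) = \partial_i A\,\nabla u + A\,\partial_i\nabla u$ holds in the weak/distributional sense, which requires $A\in W^{1,\infty}$ and $\nabla u\in H^1$ — both available here — and one must keep track of the nesting of radii ($r < R < 1$) so that every object is in the right local space. There is no analytic difficulty beyond what has already been done for Theorem \ref{t:H2}; the corollary is essentially a reformulation of the computation in its proof, now that we know the limiting object $u_i$ genuinely belongs to $H^1$.
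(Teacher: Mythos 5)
Both routes you sketch are correct, and the second one (passing to the limit $h\to 0$ in the difference-quotient weak formulation) is precisely the paper's proof: the paper rewrites the identity as
\begin{equation*}
-\int A\nabla(D^h_i u)\cdot\nabla\phi = \int (A(\cdot+he_i)-A)\nabla(D^h_i u)\cdot\nabla\phi + \int D^h_i A\,\nabla u\cdot\nabla\phi + \int D^h_i F\cdot\nabla\phi,
\end{equation*}
and sends each term to its limit using the uniform $H^1$ bound on $D^h_i u$, the uniform continuity of $A$, the a.e.\ differentiability of $A$ (Rademacher), and the weak $L^2$ convergence $D^h_i F\rightharpoonup\partial_i F$. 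Your first route — testing \eqref{E2} against $\partial_i\phi$ and integrating by parts — is a genuinely different and somewhat slicker argument that becomes available only \emph{after} Theorem \ref{t:H2} has established $u\in H^2_{\loc}$; it trades the limiting procedure for a single application of the product rule $\partial_i(A\nabla u)=\partial_i A\,\nabla u + A\nabla u_i$, valid because $A\in W^{1,\infty}$ and $\nabla u\in H^1_{\loc}$. The price is that you must invoke this product rule for a Lipschitz coefficient times an $H^1$ field (a standard but not entirely trivial fact, usually proved by the very mollification argument you are trying to avoid), whereas the paper's approach sidesteps it by never differentiating the product $A\nabla u$ directly, handling $A(\cdot+he_i)\nabla(D^h_i u)$ and $D^h_i A\,\nabla u$ separately. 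Both are sound; the paper's choice keeps the corollary stylistically in line with the difference-quotient machinery it just set up, while yours is shorter once the prerequisites are taken for granted.
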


\begin{proof}
The fact that $u_i\in H^1(B_r)$ is implied by $u\in H^2(B_r)$. Then, in order to have the weak formulation for $u_i$ we need to pass to the limit in the weak formulation for $D^h_iu$; that is,
\begin{equation*}
-\int_{B_1}A(x+he_i)D^h_i(\nabla u)(x)\cdot\nabla\phi(x)=\int_{B_1}D_i^hA(x)\nabla u(x)\cdot\nabla\phi(x)+\int_{B_1}D^h_iF(x)\cdot\nabla\phi(x),
\end{equation*}
which holds true for any $\phi\in C^\infty_c(B_r)$ just taking $|h|<<1$ small enough. Then one can rewrite the formulation above as
\begin{eqnarray*}
-\int_{B_1}A(x)\nabla (D^h_i u)(x)\cdot\nabla\phi(x)&=&\int_{B_1}(A(x+he_i)-A(x))\nabla(D^h_i u)(x)\cdot\nabla\phi(x)\\
&&+\int_{B_1}D_i^hA(x)\nabla u(x)\cdot\nabla\phi(x)+\int_{B_1}D^h_iF(x)\cdot\nabla\phi(x).
\end{eqnarray*}
Using that $D^h_i(\nabla u)=\nabla(D^h_iu)$ is uniformly bounded in $L^2$ (i.e. $D^h_iu$ is uniformly bounded in $H^1$) and hence the weak convergence in $H^1$ one has 
$$\int_{B_1}A(x)\nabla (D^h_i u)(x)\cdot\nabla\phi(x)\to\int_{B_1}A(x)\nabla u_i(x)\cdot\nabla\phi(x).$$
Again by the uniform bound in $H^1$ of $D^h_iu$ and the uniform continuity of $A$, we get
$$\int_{B_1}(A(x+he_i)-A(x))\nabla(D^h_i u)(x)\cdot\nabla\phi(x)\to0.$$
The Lipschitz continuity of $A$ gives a.e. differentiability and hence
$$\int_{B_1}D_i^hA(x)\nabla u(x)\cdot\nabla\phi(x)\to\int_{B_1}\partial_iA(x)\nabla u(x)\cdot\nabla\phi(x).$$
Finally, using point (iii) in Lemma \ref{l:difference}; that is, the weak convergence $D^h_iF\to\partial_iF$ in $L^2$, we have
$$\int_{B_1}D^h_iF(x)\cdot\nabla\phi(x)\to\int_{B_1}\partial_iF(x)\cdot\nabla\phi(x).$$
\end{proof}

\subsection{$H^k$ estimates}
Below we state the $H^k$ local regularity theorem for general weak solutions to \eqref{E2} in $B_R$. Here $k\geq2$ and so Theorem \ref{t:H2} is included. It is more convenient to state the result with general radii $0<r<R$ since the induction argument involved in its proof requires the estimate in general balls centered at $0$.
\begin{Theorem}[$H^k$ estimates]\label{t:Hk}
Let $k\geq2$, $R>0$, $A\in C^{k-2,1}(B_R)$ with $\|A\|_{C^{k-2,1}(B_R)}\leq \overline L$, $F\in H^{k-1}(B_R)$ and $u\in H^1(B_R)$ be a weak solution to \eqref{E2} in $B_R$. Then $u\in H^{k}_\loc(B_{R})$ and given $0<r<R$ there exists a constant $C>0$ depending only on $n$, $k$, the ellipticity constants, $\overline L$ and $R,r>0$ (blows-up as $R-r\to0$) such that
\begin{equation*}
\|u\|_{H^{k}(B_{r})}\leq C(\|u\|_{L^2(B_R)}+\|F\|_{H^{k-1}(B_R)}).
\end{equation*}
\end{Theorem}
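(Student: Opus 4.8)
The plan is to prove the statement by induction on $k\geq 2$, using Theorem \ref{t:H2} as the base case and Corollary \ref{c:derivative} as the inductive engine. Throughout, I would argue with general concentric balls centered at the origin, since this is exactly what the induction forces: to gain one more derivative at radius $r$ we must invoke the previous step on an intermediate radius, and we need room to do so.

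\textbf{Inductive step.} Suppose the theorem holds for some $k\geq 2$; we prove it for $k+1$. So assume $A\in C^{k-1,1}(B_R)$ with $\|A\|_{C^{k-1,1}(B_R)}\leq\overline L$ and $F\in H^k(B_R)$. In particular $A\in C^{k-2,1}(B_R)$ and $F\in H^{k-1}(B_R)$, so the inductive hypothesis already gives $u\in H^k_\loc(B_R)$ with the estimate
\begin{equation*}
\|u\|_{H^k(B_\rho)}\leq C(\|u\|_{L^2(B_R)}+\|F\|_{H^{k-1}(B_R)})
\end{equation*}
for any $0<\rho<R$. Fix $0<r<R$ and pick an intermediate radius $r<\rho<R$. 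By Corollary \ref{c:derivative}, each $u_i=\partial_i u$ lies in $H^1(B_\rho)$ and is a weak solution in $B_\rho$ of
\begin{equation*}
-\mathrm{div}(A\nabla u_i)=\mathrm{div}(\partial_i A\,\nabla u+\partial_i F).
\end{equation*}
Now I want to apply the inductive hypothesis (at level $k$) to $u_i$ on the pair $B_r\subset\subset B_\rho$. The coefficient matrix is still $A\in C^{k-2,1}(B_\rho)$ with the same norm bound. The new field term is $G_i:=\partial_i A\,\nabla u+\partial_i F$, and I must check $G_i\in H^{k-1}(B_\rho)$ with a controlled norm. For $\partial_i F$ this is immediate from $F\in H^k(B_R)$. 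For $\partial_i A\,\nabla u$, note $\partial_i A\in C^{k-2,1}(B_\rho)$ and $\nabla u\in H^{k-1}(B_\rho)$ (since $u\in H^k_\loc$); the product of a $C^{k-2,1}$ function with an $H^{k-1}$ function is in $H^{k-1}$ by the Leibniz rule — each term in $D^\beta(\partial_i A\,\partial_\ell u)$ with $|\beta|\leq k-1$ distributes derivatives, and every factor is controlled: derivatives of $\partial_i A$ up to order $k-1$ are in $L^\infty$ (that is exactly $C^{k-2,1}$), and derivatives of $\partial_\ell u$ up to order $k-1$ are in $L^2(B_\rho)$. Thus
\begin{equation*}
\|G_i\|_{H^{k-1}(B_\rho)}\leq C\big(\overline L\,\|u\|_{H^k(B_\rho)}+\|F\|_{H^k(B_R)}\big)\leq C(\|u\|_{L^2(B_R)}+\|F\|_{H^k(B_R)}),
\end{equation*}
using the inductive estimate for $\|u\|_{H^k(B_\rho)}$ in the last step. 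Applying the inductive hypothesis to $u_i$ on $B_r\subset B_\rho$ gives $u_i\in H^k(B_r)$ with
\begin{equation*}
\|u_i\|_{H^k(B_r)}\leq C\big(\|u_i\|_{L^2(B_\rho)}+\|G_i\|_{H^{k-1}(B_\rho)}\big)\leq C(\|u\|_{L^2(B_R)}+\|F\|_{H^k(B_R)}).
\end{equation*}
Summing over $i=1,\dots,n$, all partial derivatives of $u$ lie in $H^k(B_r)$, hence $u\in H^{k+1}(B_r)$, and combining with the $H^k$ bound on $u$ itself (which also controls $\|u\|_{L^2}$ and lower derivatives) yields
\begin{equation*}
\|u\|_{H^{k+1}(B_r)}\leq C(\|u\|_{L^2(B_R)}+\|F\|_{H^k(B_R)}),
\end{equation*}
with $C$ depending on $n$, $k$, the ellipticity constants, $\overline L$, and on $R,r$ through the chain of radii (blowing up as $R-r\to 0$, since the intermediate $\rho$ and the constants from Theorem \ref{t:H2} / Corollary \ref{c:derivative} degenerate). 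This closes the induction.

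\textbf{Main obstacle.} The conceptual steps are routine; the one place that needs genuine care is the Leibniz/product estimate showing $\partial_i A\,\nabla u\in H^{k-1}$ with the right quantitative bound, and — relatedly — the bookkeeping of radii so that the constant's dependence on $R,r$ is honestly tracked through the induction (one loses a definite fraction of the gap $R-r$ at each of the finitely many steps, which is fine for a fixed $k$). A minor subtlety worth a sentence: to invoke Corollary \ref{c:derivative} and the product rule one uses that $A\in C^{k-1,1}\subset C^{1}$ so that $\partial_i A$ makes classical sense and the a.e.\ differentiability exploited in that corollary is available; this is automatic under our hypotheses for $k\geq 2$.
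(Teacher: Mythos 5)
Your proposal is correct and follows essentially the same argument as the paper: induction on $k$ with Theorem \ref{t:H2} as the base case, Corollary \ref{c:derivative} to pass the equation to the derivatives $u_i=\partial_i u$, the observation that $\partial_iA\nabla u+\partial_iF\in H^{k-1}$ via the product rule, and then the inductive hypothesis applied to $u_i$ on a nested ball. You spell out the Leibniz bookkeeping and the radius-tracking a bit more explicitly than the paper, but the structure and key steps coincide.
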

\begin{proof}
Let us prove the result by induction on $k\geq2$. The case $k=2$ is Theorem \ref{t:H2} (together with Remark \ref{r:H2}). Then let us suppose the result true for a general $k\geq2$ and prove it for $k+1$. So, $A\in C^{k-1,1}(B_1)$, $F\in H^k(B_1)$ and we want to prove that the weak solution $u\in H^1(B_1)$ actually belongs to $H^{k+1}(B_{1/2})$ together with the estimate in $B_{1/2}$ (wlog we can choose $R=1$ and $r=1/2$ for the sake of simplicity). Let us consider a given partial derivative $u_i=\partial_i u$ with $i\in\{1,...,n\}$. Theorem \ref{t:H2} and Corollary \ref{c:derivative} are saying that $u_i\in H^1(B_{r})$ for any $0<r<1$ and is a solution to
\begin{equation*}
-\mathrm{div}(A\nabla u_i)=\mathrm{div}(\partial_i A\nabla u+\partial_iF) \qquad\mathrm{in \ } B_{r}.
\end{equation*}
By our assumptions we know that $\partial_iA\in C^{k-2,1}(B_1)$, $\partial_iF\in H^{k-1}(B_1)$ and by the inductive hypothesis we also know that $\nabla u\in H^{k-1}(B_{r})$. Then, $u_i$ is a solution of
\begin{equation*}
-\mathrm{div}(A\nabla u_i)=\mathrm{div}(\tilde F) \qquad\mathrm{in \ } B_{r},
\end{equation*}
with $\tilde F:=\partial_i A\nabla u+\partial_iF\in H^{k-1}(B_{r})$ (by induction again it is easy to see that the product $\partial_i A\nabla u\in H^{k-1}$). Then, the inductive hypothesis again gives us the desired regularity $u_i\in H^k(B_{1/2})$ with the estimate
\begin{equation*}
\|u_i\|_{H^k(B_{1/2})}\leq C(\|u_i\|_{L^2(B_{r})}+\|\tilde F\|_{H^{k-1}(B_{r})}),
\end{equation*}
which can be easily manipulated to the desired one having
$$\|\partial_i A\nabla u\|_{H^{k-1}(B_{r})}\leq \|\partial_i A\|_{C^{k-2,1}(B_{r})}\|\nabla u\|_{H^{k-1}(B_{r})},$$
and applying again the inductive hypothesis together with the Caccioppoli inequality for $u$.
\end{proof}

\subsection{Smooth data imply smooth solutions}
$H^k$ estimates for solutions to \eqref{E2} imply $C^\infty$ regularity for solutions to \eqref{E} when data are smooth.
\begin{Corollary}[Smooth data $\Rightarrow$ smooth solutions]\label{c:smooth}
Let $A,f,F\in C^\infty(B_1)$ and $u\in H^1(B_1)$ be a weak solution to \eqref{E} in $B_1$. Then $u\in C^\infty_\loc(B_1)$.
\end{Corollary}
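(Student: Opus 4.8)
The plan is to reduce the equation \eqref{E} with forcing term to the forcing-free divergence-form equation \eqref{E2}, and then to bootstrap the $H^k$ estimates of Theorem \ref{t:Hk} against the Sobolev embedding.

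First I would absorb $f$ into the field term. Since $f\in C^\infty(B_1)$, one can exhibit a smooth vector field $G$ with $\mathrm{div}\,G=f$ in $B_1$; the simplest choice is $G=(G_1,0,\dots,0)$ with $G_1(x_1,x')=\int_0^{x_1}f(s,x')\,ds$, which is smooth because $f$ is and satisfies $\partial_1 G_1=f$. Then for every $\phi\in C^\infty_c(B_1)$ integration by parts gives $\int_{B_1}f\phi=-\int_{B_1}G\cdot\nabla\phi$, so the weak formulation of \eqref{E} rewrites as
\begin{equation*}
\int_{B_1}A\nabla u\cdot\nabla\phi=-\int_{B_1}(F+G)\cdot\nabla\phi\qquad\text{for any }\phi\in C^\infty_c(B_1).
\end{equation*}
Thus, with $\tilde F:=F+G\in C^\infty(B_1)$, the function $u\in H^1(B_1)$ is a weak solution of \eqref{E2} (with field $\tilde F$) in $B_1$. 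This is exactly the device advertised in the introduction: the presence of a field term lets one dispense with the scalar forcing term altogether.

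Next I would run the bootstrap. Fix any $R\in(0,1)$; then $A\in C^\infty(\overline{B_R})$, so $\|A\|_{C^{k-2,1}(B_R)}<\infty$ for every $k\geq2$, and likewise $\tilde F\in C^\infty(\overline{B_R})\subset H^{k-1}(B_R)$. Hence Theorem \ref{t:Hk}, applied on $B_R$, yields $u\in H^k(B_r)$ for every $0<r<R$ and every $k\geq2$. Since $R<1$ was arbitrary and the balls $B_r$ exhaust $B_1$, this means $u\in H^k_\loc(B_1)$ for every $k$. Finally, by the Sobolev (Morrey) embedding $H^k(B_r)\hookrightarrow C^{m}(\overline{B_r})$ whenever $k>m+n/2$, so $u\in C^m_\loc(B_1)$ for every $m\in\mathbb N$; that is, $u\in C^\infty_\loc(B_1)$.

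No step is genuinely hard here. The only point worth isolating is the first one — writing $f=\mathrm{div}\,G$ with $G$ as regular as $f$ — which is what reduces everything to the single equation \eqref{E2} already handled by Theorem \ref{t:Hk}; one should also be mildly careful to work on balls $B_R$ with $R<1$, so that the $C^{k-2,1}$-norm of $A$ required by that theorem is finite.
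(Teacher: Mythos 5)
Your proof is correct and follows essentially the same route as the paper: rewrite $f=\operatorname{div} G$ by integrating $f$ along a coordinate direction (the paper's remark after the corollary does this with $x_n$, you use $x_1$), absorb $G$ into the field term to reduce to \eqref{E2}, then bootstrap Theorem~\ref{t:Hk} against the Morrey embedding. If anything you are a touch more careful than the paper's terse proof, since you explicitly restrict to balls $B_R$ with $R<1$ so the $C^{k-2,1}(B_R)$ and $H^{k-1}(B_R)$ norms used by Theorem~\ref{t:Hk} are finite, which is the right precaution when the data are only assumed smooth on the open ball.
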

\begin{proof}
We can rewrite the equation \eqref{E} as \eqref{E2} for a certain $\tilde F\in C^\infty(B_1)$; that is,
\begin{equation*}
-\mathrm{div}(A\nabla u_i)=\mathrm{div}(\tilde F) \qquad\mathrm{in \ } B_{1}.
\end{equation*}
In particular $A\in C^{k-2,1}(B_1)$ and $\tilde F\in H^{k-1}(B_1)$ for any given $k\geq2$. Then, Theorem \ref{t:Hk} implies $u\in H^k_\loc(B_1)$ for any $k\geq2$, which leads to $u\in C^\infty_\loc(B_1)$ via Morrey embeddings.
\end{proof}

\begin{remark}
A way to pass from a forcing term to the divergence of a field term is the following:
\begin{equation*}
f(x',x_n)=\partial_n\left(\int_0^{x_n}f(x',t)dt\right)=\mathrm{div}\left(e_n\int_0^{x_n}f(x',t)dt\right)=\mathrm{div} F(x',x_n).
\end{equation*}
If $f\in C^\infty(B_1)$, then also $F\in C^\infty(B_1)$.
\end{remark}

\section{Liouville theorem, $L^\infty$ bounds}\label{s:4}

\subsection{Liouville theorem}
We say that $u\in H^1_\loc(\R^n)$ (i.e. $H^1(B_R)$ for any $R>0$) is an entire harmonic function, and we write $-\Delta u=0$ in $\R^n$, if
\begin{equation*}
\int_{\R^n}\nabla u\cdot\nabla\phi=0\qquad\mathrm{for \ any \ }\phi\in C^\infty_c(\R^n).
\end{equation*}
Before stating the Liouville theorem we recall the Caccioppoli inequality in case of zero right hand sides
\begin{Proposition}[Caccioppoli inequality with zero right hand sides]\label{p:caccioppoli2}
Let $0<r<R$. Then there exists a universal constant $C>0$ in $B_R$ such that
\begin{equation*}
\|\nabla u\|_{L^2(B_r)}\leq \frac{C}{R-r}\|u\|_{L^2(B_R)}
\end{equation*}
for any weak solution $u$ to \eqref{E} in $B_R$ with $f=F=0$.
\end{Proposition}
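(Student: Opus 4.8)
The plan is to note that this is precisely the special case $f=F=0$ of the Caccioppoli inequality of Proposition \ref{p:caccioppoli}, so one could in principle simply invoke that result; the only point worth stressing is that, once the inhomogeneous terms vanish, the resulting constant is genuinely universal and in particular \emph{independent of $R$}, unlike in the remark following Proposition \ref{p:caccioppoli}. For a self-contained statement I would nonetheless rerun the short energy argument directly on $B_R$.

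First I would fix a radial cut-off $\eta\in C^\infty_c(B_R)$ with $0\le\eta\le1$, $\eta\equiv1$ on $B_r$, and $|\nabla\eta|\le 2/(R-r)$; such a function exists since $r<R$. Since $u\in H^1(B_R)$ and $\eta$ is compactly supported in $B_R$, the function $\phi=\eta^2 u$ is an admissible test function in $H^1_0(B_R)$. Testing the homogeneous equation against it and using $\nabla(\eta^2 u)=\eta^2\nabla u+2\eta u\nabla\eta$ gives
\[
\int_{B_R}\eta^2\,A\nabla u\cdot\nabla u = -2\int_{B_R}\eta u\,A\nabla u\cdot\nabla\eta .
\]

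Next I would estimate the two sides separately. By uniform ellipticity \eqref{UE} the left-hand side is bounded below by $\lambda\int_{B_R}\eta^2|\nabla u|^2$, while for the right-hand side I would use the bilinear bound \eqref{bilinear} together with \eqref{UE2} and Young's inequality with a parameter $\eps>0$ to obtain
\[
2\left|\int_{B_R}\eta u\,A\nabla u\cdot\nabla\eta\right| \le \eps\int_{B_R}\eta^2|\nabla u|^2 + \frac{n^2L^2}{\eps}\int_{B_R}u^2|\nabla\eta|^2 .
\]
Combining the two displays, absorbing the $\eps$-term into the left-hand side, choosing $\eps=\lambda/2$, and then using $\eta\equiv1$ on $B_r$ and $|\nabla\eta|\le 2/(R-r)$ yields
\[
\frac{\lambda}{2}\int_{B_r}|\nabla u|^2 \le \frac{8n^2L^2}{\lambda}\,\frac{1}{(R-r)^2}\,\|u\|_{L^2(B_R)}^2 ,
\]
and taking square roots gives the claim with $C$ a universal constant (which, as in the proof of Proposition \ref{p:caccioppoli}, can be taken to depend only on the ellipticity ratio in the symmetric case).

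I do not expect any genuine obstacle: the argument is a verbatim repetition of the proof of Proposition \ref{p:caccioppoli} with the terms involving $f$ and $F$ deleted. The only thing to watch is the bookkeeping of constants — with $f=F=0$ there is no term forcing a dependence on $\max\{1,R\}$, so $C$ depends only on $n,\lambda,\Lambda,L$ and is universal in $B_R$ as claimed.
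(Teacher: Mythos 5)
Your proof is correct. The paper gives no separate argument for Proposition~\ref{p:caccioppoli2}: it is simply \emph{recalled} as the special case $f=F=0$ of Proposition~\ref{p:caccioppoli}, and your write-up is exactly the same energy estimate with the inhomogeneous terms removed, together with the correct bookkeeping showing that the constant depends only on $n,\lambda,L$ (hence is universal in $B_R$, with no dependence on $R$, consistent with the remark after Proposition~\ref{p:caccioppoli}). The only stylistic difference is that you expand $\nabla(\eta^2u)=\eta^2\nabla u+2\eta u\nabla\eta$ directly rather than recombining terms into $\nabla(\eta u)$ as in the proof of Proposition~\ref{p:caccioppoli}; in the homogeneous case this is a touch cleaner and equally valid.
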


The following is a rigidity result which states that the only entire harmonic functions having a polynomial growth are the harmonic polynomials.

\begin{Theorem}[Liouville]\label{t:liouville}
Let $u$ be an entire harmonic function in $\R^n$ such that there exist two constants $C>0$ and $\gamma\geq0$ such that
\begin{equation*}
|u(x)|\leq C(1+|x|)^\gamma,\qquad\mathrm{for \ any \ }x\in\R^n.
\end{equation*}
Then $u$ is a polynomial of degree at most $\lfloor \gamma\rfloor:=\max\{k\in\mathbb Z \, : \, k\leq\gamma\}$.
\end{Theorem}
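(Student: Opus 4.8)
The plan is to combine interior derivative estimates for harmonic functions with the polynomial growth bound, showing that all derivatives of order $\lfloor\gamma\rfloor+1$ vanish identically. First I would observe that by Corollary \ref{c:smooth} (applied with $A=\mathrm{Id}$, $f=F=0$) the function $u$ is smooth in every ball, hence smooth on all of $\R^n$; moreover each partial derivative $\partial_j u$ is again entire harmonic, since differentiating the equation $-\Delta u=0$ (rigorously: using the difference-quotient machinery of Corollary \ref{c:derivative}) preserves harmonicity. Thus any derivative $D^\beta u$ is entire harmonic.

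The core of the argument is an interior gradient estimate of the form
\begin{equation}\label{e:liouville-grad}
\norm{\nabla u}{L^\infty(B_{R/2})}\leq \frac{C_n}{R}\,\norm{u}{L^\infty(B_R)},
\end{equation}
which I would derive by chaining three facts proved earlier: the Caccioppoli inequality with zero right-hand side (Proposition \ref{p:caccioppoli2}) to pass from sup-norm control of $u$ to $L^2$ control of $\nabla u$ on a slightly smaller ball; the $H^k$ estimates (Theorem \ref{t:Hk}) applied iteratively to control $\norm{u}{H^k(B_{R/2})}$ by $\norm{u}{L^2(B_R)}$ for $k$ large; and a Morrey/Sobolev embedding $H^k\hookrightarrow C^1$ for $k>n/2+1$. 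Careful bookkeeping of the scaling in $R$ (rescaling $u$ to $u(Rx)$ on $B_1$, where harmonicity is preserved and the constants become universal) yields the factor $1/R$ in \eqref{e:liouville-grad}. Applying \eqref{e:liouville-grad} to the harmonic function $D^\beta u$ instead of $u$ gives, for every multiindex $\beta$ with $|\beta|=m$,
\begin{equation*}
\abs{D^{\beta}u(0)}\leq \frac{C_{n,m}}{R^{m}}\,\norm{u}{L^\infty(B_{2R})}\leq \frac{C_{n,m}}{R^{m}}\,C\,(1+2R)^{\gamma},
\end{equation*}
using the growth hypothesis in the last step.

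Now take $m=\lfloor\gamma\rfloor+1$, so that $m>\gamma$. Letting $R\to\infty$, the right-hand side behaves like $R^{\gamma-m}\to 0$, hence $D^{\beta}u(0)=0$ for every $\beta$ of order $m$. Since $0$ is not special — the estimate can be centered at any point $x_0$ by translating, and translation preserves both harmonicity and the polynomial growth bound (with a possibly larger constant $C$) — we conclude $D^{\beta}u\equiv 0$ on $\R^n$ for all $|\beta|=\lfloor\gamma\rfloor+1$. A smooth function all of whose derivatives of a fixed order $m$ vanish identically is a polynomial of degree at most $m-1=\lfloor\gamma\rfloor$ (one sees this by Taylor expansion with integral remainder, or by induction on $m$ integrating the vanishing of the top derivatives). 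This is the desired conclusion.

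The main obstacle I anticipate is establishing the scale-invariant gradient estimate \eqref{e:liouville-grad} with the \emph{correct} power of $R$: the $H^k$ estimates as stated carry constants that blow up as the inner and outer radii coalesce, so one must fix a definite ratio of radii (say inner $=R/2$, outer $=R$), apply the estimate on $B_1$ after rescaling, and then track how each object transforms under $x\mapsto Rx$ — the $L^\infty$ norm of $u$ is scale invariant while each derivative picks up a factor $R^{-|\beta|}$. Once the $R$-dependence is pinned down correctly, the limit $R\to\infty$ is immediate; the rest is routine.
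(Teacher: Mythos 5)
Your argument is correct, but it takes a genuinely different route from the paper's. The paper stays entirely at the $L^2$ level: it iterates the zero-right-hand-side Caccioppoli inequality (Proposition~\ref{p:caccioppoli2}) $k$ times on shrinking balls to get $c_kR^{2k}\int_{B_{R/2^k}}|D^ku|^2\leq\int_{B_R}u^2\leq CR^{2\gamma+n}$, then chooses $k>\gamma+n/2$ (the extra $+n$ coming from the $|B_R|$ volume factor when passing from the $L^\infty$ growth bound to an $L^2$ bound), sends $R\to\infty$ to kill $D^ku$ on every compact, and finally uses a one-line remark — a polynomial of degree $d$ growing like $|x|^d$ must have $d\leq\lfloor\gamma\rfloor$ — to bring the degree down from $k-1$ to $\lfloor\gamma\rfloor$. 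You instead upgrade to a pointwise interior gradient estimate $\|\nabla u\|_{L^\infty(B_{R/2})}\leq C_nR^{-1}\|u\|_{L^\infty(B_R)}$ by chaining Caccioppoli, the $H^k$ estimates of Theorem~\ref{t:Hk}, and Morrey embedding (with rescaling to pin down the $R$-dependence), and then iterate it. Your route buys the sharp choice $m=\lfloor\gamma\rfloor+1$ directly, since you never pick up the dimensional factor $R^n$; the paper's route buys elementarity, needing nothing beyond Caccioppoli and the $L^2$ framework. One small point worth making explicit in your writeup: the translation step, where you center the estimate at an arbitrary $x_0$, does need the observation that $|u(x)|\leq C(1+|x_0|+|x-x_0|)^\gamma\leq C'(1+|x-x_0|)^\gamma$ with a constant $C'$ depending on $x_0$ — harmless since $x_0$ is fixed before $R\to\infty$, but it should be said.
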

\begin{proof}
We already know that $u\in C^\infty(\R^n)$ by Corollary \ref{c:smooth} and that $D^ku$ is still entire harmonic in $\R^n$ for any $k\in\mathbb N$ by Corollary \ref{c:derivative}. Given an arbitrary ball $B_R$ with $R>0$, applying repeatedly the Caccioppoli inequality in Proposition \ref{p:caccioppoli2} one can get
\begin{equation*}
\int_{B_R}u^2\geq c_1R^2\int_{B_{R/2}}|\nabla u|^2\geq c_1R^2\int_{B_{R/2}}|D u|^2 \geq c_2R^4\int_{B_{R/4}}|\nabla (Du)|^2\geq c_2R^4\int_{B_{R/4}}|D^2u|^2\geq...
\end{equation*}
So, given any partial derivative $D^ku$ of order $k$, using the polynomial growth condition on $u$
\begin{equation*}
c_kR^{2k}\int_{B_{R/2^k}}|D^ku|^2\leq CR^{2\gamma+n}.
\end{equation*}
Let us take $k\in\mathbb N$ so that $2\gamma+n-2k<0$. 
Hence taking an arbitrary compact set $K\subset\R^n$, one has $K\subset B_{R/2^k}$ for $R>0$ large enough, and considering the limit $R\to\infty$, we get
\begin{equation*}
\int_{K}|D^ku|^2\leq\int_{B_{R/2^k}}|D^ku|^2\leq CR^{2\gamma+n-2k}\to0.
\end{equation*}
Hence, $D^ku\equiv0$ in any compact $K\subset\R^n$. This implies that $u$ is a polynomial of degree at most $k-1$. However, the growth condition implies that its degree can not exceed $\lfloor \gamma\rfloor$.
\end{proof}

\begin{remark}
Theorem \ref{t:liouville} in particular says that:
\begin{itemize}
\item[(i)] if $\gamma<1$, then $u(x)\equiv c$ for some $c\in\R$ (i.e. $u$ is constant);
\item[(ii)] if $\gamma<2$, then $u(x)=a\cdot x+b$ for some $a\in\R^n,b\in\R$ (i.e. $u$ is linear).

\end{itemize}
\end{remark}

\begin{remark}\label{r:superpolynomial}
Without assuming a polynomial bound, there exist entire harmonic functions which are not polynomials. Let $a,b\in\R^n$ with $|a|=|b|$ and $a\cdot b=0$. Then $u(x)=e^{a\cdot x}\sin(b\cdot x)$ is entire harmonic in $\R^n$. In dimension $n=2$ one can easily provide entire harmonic functions with arbitrary high growth by taking the real or the imaginary parts of the holomorphic complex function $e^z$ composed with itself $k$ times (for any $k\in\mathbb N$).
\end{remark}

\begin{remark}
Theorem \ref{t:liouville} holds true even if $u$ is an entire solution of $-\mathrm{div}(A\nabla u)=0$ in $\R^n$ where $A$ is a constant coefficient uniformly elliptic matrix.
\end{remark}

\subsection{$L^\infty$ bounds}
Aim of this section is the proof of the famous $L^2\to L^\infty$ estimate proved by De Giorgi \cite{DeG57} and Nash \cite{Nas57,Nas58}. This is just the first part of their proof of the Hilbert XIXth problem. Then, there is a different proof by Moser \cite{Mos60}. Both De Giorgi's and Moser's approaches implement an iteration procedure involving the Sobolev embedding inequality $H^1_0\subset L^{2^*}$; that is,
\begin{equation}\label{sobolev}
\left(\int_{B_R}|u|^{2^*}\right)^{2/2^*}\leq C\int_{B_R}|\nabla u|^2,\qquad\mathrm{with \ }2^*:=\frac{2n}{n-2}>2, \ (2^* \ \mathrm{is \ any \ }p>0 \ \mathrm{if \ }n=2).
\end{equation}
Let us remark that the constant above depends on the dimension only, while for $n=2$ it depends also on $R$.

\begin{Theorem}\label{t:Linfty}
Let $p>n/2$, $q>n$, $0<r<R\leq1$. Then there exists a constant $C>0$ depending only on $n,p,q,r,R$ and the ellipticity constants in $B_R$ such that
\begin{equation*}\label{Linfty}
\|u\|_{L^\infty(B_{r})}\leq C(\|u\|_{L^2(B_R)}+\|f\|_{L^p(B_R)}+\|F\|_{L^q(B_R)})
\end{equation*}
for any $u\in H^1(B_R)$ weak solution to \eqref{E} in $B_R$.
\end{Theorem}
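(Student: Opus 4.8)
The plan is to run De~Giorgi's truncation iteration on the super-level sets of $u$, controlling the two right-hand sides by H\"older's inequality in exactly the way the strict thresholds $p>n/2$ and $q>n$ permit.

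First I would reduce. Since $-u$ solves \eqref{E} in $B_R$ with data $-f,-F$, it suffices to prove the one-sided bound $u\le C\big(\norm{u}{L^2(B_R)}+\norm{f}{L^p(B_R)}+\norm{F}{L^q(B_R)}\big)$ a.e.\ in $B_r$; applying it to $\pm u$ gives the theorem. Moreover, dividing $u,f,F$ by the (assumed positive) quantity $M:=\norm{u}{L^2(B_R)}+\norm{f}{L^p(B_R)}+\norm{F}{L^q(B_R)}$, linearity of \eqref{E} lets me assume $M=1$, so that it is enough to show $u\le C$ a.e.\ in $B_r$ for a constant depending only on $n,p,q,r,R,\lambda,\Lambda$.

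For a level $k\ge0$ and radii $r\le\rho'<\rho\le R$, take $\eta\in C^\infty_c(B_\rho)$ with $\eta\equiv1$ on $B_{\rho'}$, $0\le\eta\le1$, $\abs{\nabla\eta}\le2/(\rho-\rho')$, and test \eqref{E} with $\eta^2(u-k)^+\in H^1_0(B_R)$. Writing $w_k:=(u-k)^+$ (supported in $A(k,\rho):=\{u>k\}\cap B_\rho$, with $\nabla w_k=\nabla u$ on $\{u>k\}$), uniform ellipticity \eqref{UE}, the bound \eqref{bilinear} and Young's inequality give a Caccioppoli-type estimate which, fed into the Sobolev inequality \eqref{sobolev} for $\eta w_k\in H^1_0(B_\rho)$, yields
\begin{equation*}
\norm{\eta w_k}{L^{2^*}(B_\rho)}^2\le\frac{C}{(\rho-\rho')^2}\int_{B_\rho}w_k^2+C\int_{A(k,\rho)}\abs{f}\,w_k+\frac{C}{(\rho-\rho')^2}\int_{A(k,\rho)}\abs{F}^2
\end{equation*}
with $C$ universal (for $n=2$ one fixes $2^*$ to be a large finite exponent). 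Now H\"older turns the forcing integrals into powers of $\abs{A(k,\rho)}$: $\int_A\abs{F}^2\le\norm{F}{L^q(B_R)}^2\abs{A}^{1-2/q}$, and $\int_A\abs{f}\,w_k\le\norm{f}{L^p(B_R)}\norm{w_k}{L^{p'}(A)}$ with $p'<2^*$, the last factor being estimated through $\norm{w_k}{L^2(A)}$ and $\norm{\eta w_k}{L^{2^*}(B_\rho)}$ (interpolating when $p<2$ and absorbing a small multiple of $\norm{\eta w_k}{L^{2^*}}^2$ into the left-hand side). Combining with $\int_{B_{\rho'}}w_k^2\le\abs{A(k,\rho)}^{2\sigma}\norm{\eta w_k}{L^{2^*}(B_\rho)}^2$ (again H\"older, with $\sigma:=1/2-1/2^*>0$), I get an inequality bounding $\int_{B_{\rho'}}w_k^2$ by $\int_{B_\rho}w_k^2$, $\norm{f}{L^p(B_R)}$ and $\norm{F}{L^q(B_R)}$, each carrying a power of $\abs{A(k,\rho)}$.

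To conclude, fix $M_0\ge1$ (to be chosen) and set $k_j:=M_0(1-2^{-j})\uparrow M_0$, $\rho_j:=r+(R-r)2^{-j}\downarrow r$, $a_j:=\int_{B_{\rho_j}}\big((u-k_j)^+\big)^2$. Chebyshev gives $\abs{A(k_{j+1},\rho_j)}\le4^{\,j+1}M_0^{-2}a_j$; plugging the previous inequality with $k=k_{j+1}$, $\rho'=\rho_{j+1}$, $\rho=\rho_j$ (and $\int_{B_{\rho_j}}\big((u-k_{j+1})^+\big)^2\le a_j$) produces
\begin{equation*}
a_{j+1}\le C\,b^{\,j}\,M_0^{-\mu}\,a_j^{\,1+\delta}
\end{equation*}
for constants $b>1$, $\mu>0$, $\delta>0$ depending only on $n,p,q,r,R,\lambda,\Lambda$ (with $\delta\to0$ as $p\downarrow n/2$ or $q\downarrow n$). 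Since $a_0=\int_{B_R}(u^+)^2\le\norm{u}{L^2(B_R)}^2\le1$, the elementary fast-geometric-convergence lemma (if $a_{j+1}\le Kb^ja_j^{1+\delta}$ with $b>1$, $\delta>0$ and $a_0\le K^{-1/\delta}b^{-1/\delta^2}$, then $a_j\to0$) applies once $M_0$ is chosen large enough that $K^{-1/\delta}b^{-1/\delta^2}\ge1$ with $K=CM_0^{-\mu}$; then $\int_{B_r}\big((u-M_0)^+\big)^2=\lim_j a_j=0$, i.e.\ $u\le M_0$ a.e.\ in $B_r$, and applying this to $-u$ and undoing the normalization gives the claim with $C=M_0$. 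The delicate point --- and the only one --- is the bookkeeping in the previous paragraph: one must extract from the $f$- and $F$-terms powers of $\abs{A(k,\rho)}$ large enough that, after feeding in Sobolev and Chebyshev, the recursion exponent $1+\delta$ is genuinely above $1$. This forces exactly $p>n/2$ and $q>n$ (the spaces $L^{n/2}$ and $L^n$ being scale-critical and borderline); the truncated Caccioppoli inequality, the Sobolev step and the iteration lemma are all standard.
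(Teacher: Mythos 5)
Your proposal is correct and implements essentially the same De~Giorgi truncation iteration as the paper: truncated Caccioppoli inequality plus Sobolev embedding and Chebyshev on the super-level sets, culminating in a fast-geometric-convergence recursion $a_{j+1}\le K b^{j} a_j^{1+\delta}$. The only difference is cosmetic bookkeeping---the paper rescales $u$ by $\theta=\sqrt{\delta}/M$ so the truncation levels $b_k$ march to $1$ under the hypothesis $E_0\le\delta$, whereas you keep $M=1$ and let the levels $k_j$ march to a large $M_0$ chosen at the end---and these are the same by homogeneity of the equation.
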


\begin{remark}
As usual, the constant in the above theorem explodes as $R-r\to0$.
\end{remark}

\begin{proof}
Let us divide the proof into three steps. We refer to \cite{FerRos22,Vas16} for futher details.\\

\noindent{\bf Step 1: Caccioppoli inequality for truncated solutions.}
The first step is to prove a Caccioppoli inequality for the functions $v=(u-b)_+$ and $w=(u-b)_-$, where $b\in\R$ and
$$f_+(x)=\max\{f(x),0\},\qquad f_-(x)=\max\{-f(x),0\}.$$
Let us remark that $v,w\in H^1(B_R)$ since $u\in H^1(B_R)$. Moreover, whenever $v>0$ one has $\nabla v=\nabla u$ and of course $\nabla v=0$ wherever $v=0$. In analogy, whenever $w>0$ one has $\nabla w=-\nabla u$ and of course $\nabla w=0$ wherever $w=0$ (this fact is easy to prove and can be found for instance in \cite[Lemma 7.6]{GilTru77}). Then, we prove the Caccioppoli inequality for $v$ being the case of $w$ very similar. Let us fix two radii $0<r<\rho\leq R$. Let us test the equation \eqref{E} for $u$ in $B_R$ against $\eta^2v\in H^1_0(B_\rho)$ where $\eta$ is as in the proof of Proposition \ref{p:caccioppoli}; that is, $\eta\in C^\infty_c(B_\rho)$ with $0\leq\eta\leq1$, radially decreasing cut-off function with $\eta=1$ in $B_r$, $|\nabla\eta|\leq2(\rho-r)^{-1}$:
\begin{equation*}
\int_{B_\rho\cap\{v>0\}}A\nabla u\cdot\nabla(\eta^2v)=\int_{B_\rho\cap\{v>0\}}f\eta^2v-F\cdot\nabla(\eta^2v).
\end{equation*}
Then, recalling that whenever $v>0$ one has $\nabla v=\nabla u$, and proceeding as in the proof of Proposition \ref{p:caccioppoli}, one obtains
\begin{equation}\label{cacciov}
\int_{B_\rho\cap\{v>0\}}|\nabla(\eta v)|^2\leq C\left(\int_{B_\rho\cap\{v>0\}}v^2|\nabla\eta|^2+\int_{B_\rho\cap\{v>0\}}f\eta^2v+\int_{B_\rho\cap\{v>0\}}|F|^2\right).
\end{equation}
Similarly, testing \eqref{E} for $u$ in $B_R$ against $\eta^2w$ one gets
\begin{equation}\label{cacciow}
\int_{B_\rho\cap\{w>0\}}|\nabla(\eta w)|^2\leq C\left(\int_{B_\rho\cap\{w>0\}}w^2|\nabla\eta|^2+\int_{B_\rho\cap\{w>0\}}f\eta^2w+\int_{B_\rho\cap\{w>0\}}|F|^2\right).
\end{equation}

\noindent{\bf Step 2: no spike lemma.}
This is the main step of the proof. We aim to prove that, provided
\begin{equation*}
\|f\|_{L^p(B_R)}+\|F\|_{L^q(B_R)}\leq 1,
\end{equation*}
there exists $\delta\in(0,1)$ such that
\begin{itemize}
\item[(i)] if 
\begin{equation*}
\int_{B_R}|u_+|^2\leq\delta\qquad \Rightarrow \qquad u\leq 1 \ \mathrm{almost \ everywhere \ in \ } B_r.
\end{equation*}
\item[(ii)] if 
\begin{equation*}
\int_{B_R}|u_-|^2\leq\delta\qquad \Rightarrow \qquad u\geq -1 \ \mathrm{almost \ everywhere \ in \ } B_r.
\end{equation*}
\end{itemize}
We just prove (i) since (ii) is analogous. Let $b_k=1-2^{-k}$ so that $b_0=0$ and $b_k\nearrow b_\infty=1$. Let $r_k=(R-r)2^{-k}+r$ so that $r_0=R$ and $r_k\searrow r_\infty=r$. Let $D_k=B_{r_k}$ with
\begin{equation*}
D_{k+1}\subset B_{\rho_k}\subset D_k\qquad\mathrm{where \ }\rho_k=\frac{r_k+r_{k+1}}{2}.
\end{equation*}
Let us also notice that $r_{k}-r_{k+1}=2^{-(k+1)}(R-r)$. Let us define
$$v_k=(u-b_k)_+\qquad \mathrm{and}\qquad E_k=\int_{D_k}v_k^2.$$
We notice that $v_{k+1}\leq v_k$ and that $E_{k+1}\leq E_k\leq...\leq E_0\leq\delta$ with $\delta\in(0,1)$ to be announced. Let us now introduce for any $k\in\mathbb N$ the radially decreasing cut-off function $\eta_k\in C^\infty_c(B_{\rho_k})$ with $0\leq\eta_k\leq1$ and $\eta_k\equiv1$ in $D_{k+1}$, with $|\nabla\eta_k|\leq2(\rho_k-r_{k+1})^{-1}\leq \frac{c}{R-r}2^{k+1}$. Now we are going to apply \eqref{cacciov} with $v=v_{k+1}$, $\eta=\eta_k$, $r=r_{k+1}$, $\rho=\rho_k$; that is,
\begin{eqnarray*}
\int_{B_{\rho_k}}|\nabla(\eta_k v_{k+1})|^2&\leq& C\left(\int_{B_{\rho_k}}v_{k+1}^2|\nabla\eta_k|^2+\int_{B_{\rho_k}}f\eta^2_kv_{k+1}+\int_{B_{\rho_k}\cap\{v_{k+1}>0\}}|F|^2\right)\\
&=&C(I_k^1+I_k^2+I_k^3).
\end{eqnarray*}
Remember that all the above integrals are actually computed in $B_{\rho_k}\cap\{v_{k+1}>0\}$. In the last integral the latter information has to be written explicitly since the dependence on $v_{k+1}$ is missing.

First, since $v_{k+1}\leq v_k$, $B_{\rho_k}\subset D_k$ and $|\nabla\eta_k|\leq \frac{c}{R-r}2^{k+1}$
$$I_k^1=\int_{B_{\rho_k}}v_{k+1}^2|\nabla\eta_k|^2\leq \frac{c}{(R-r)^2}2^{2(k+1)}E_k.$$
Let us fix $\tau>2$ to be announced. If $n\geq3$ we will take $\tau=2^*$, and if $n=2$ the choice of $\tau>2$ will depend on $p,q$.
Let us take $\alpha>1$ so that
$$\frac{1}{p}+\frac{1}{\tau}+\frac{1}{\alpha}=1.$$
Notice that when $n\geq3$ (and with the choice $\tau=2^*$) the existence of such $\alpha>1$ needs $p>\frac{2n}{n+2}$ which is weaker than $p>\frac{n}{2}$. Instead, when $n=2$ we are requiring $p>1$ which gives the existence of such $\alpha>1$ if we choose $\tau>\frac{p}{p-1}$. 
Then, applying the H\"older inequality with exponents $p,\tau,\alpha$, we have
\begin{eqnarray*}
I_k^2&=&\int_{B_{\rho_k}}f\eta^2_kv_{k+1}\\
&\leq& \|f\|_{L^p(B_{\rho_k})}\Big(\int_{B_{\rho_k}}|\eta_kv_{k+1}|^{\tau}\Big)^{\frac{1}{\tau}}\Big(\int_{B_{\rho_k}}\chi_{\{v_{k+1}>0\}}\Big)^{1-\frac{1}{\tau}-\frac{1}{p}}\\
&\leq&\varepsilon C\int_{B_{\rho_k}}|\nabla(\eta_k v_{k+1})|^2+\frac{C}{\varepsilon}\Big(\int_{B_{\rho_k}}\chi_{\{v_{k+1}>0\}}\Big)^{2-\frac{2}{\tau}-\frac{2}{p}}.
\end{eqnarray*}
In the last inequality we used the Young inequality with a small $\varepsilon>0$ to be announced, the Sobolev inequality \eqref{sobolev} and the assumption $\|f\|_{L^p(B_R)}\leq1$. Then, since
$$\{v_{k+1}>0\}=\{u-b_{k+1}>0\}=\{u-b_k>b_{k+1}-b_k\}=\{v_k>2^{-(k+1)}\}=\{v^2_k>2^{-2(k+1)}\},$$
we have
\begin{equation*}
\int_{B_{\rho_k}}\chi_{\{v_{k+1}>0\}}=\int_{B_{\rho_k}}\chi_{\{v^2_{k}>2^{-2(k+1)}\}}\leq 2^{2(k+1)}E_k.
\end{equation*}
Then there exists a constant $C_1>1$ depending on $p,\tau$ such that
\begin{equation*}
I_k^2\leq \varepsilon C\int_{B_{\rho_k}}|\nabla(\eta_k v_{k+1})|^2+\frac{C_1^{k+1}}{\varepsilon}E_k^{2-\frac{2}{\tau}-\frac{2}{p}}.
\end{equation*}

Then, applying the H\"older inequality with exponents $q/2$ and $\beta>1$ (the existence of such $\beta>1$ needs $q>2$ which is weaker than $q>n$) so that
$$\frac{2}{q}+\frac{1}{\beta}=1,$$
we have
\begin{eqnarray*}
I_k^3&=&\int_{B_{\rho_k}\cap\{v_{k+1}>0\}}|F|^2\\
&\leq& \|F\|^2_{L^q(B_{\rho_k})}\Big(\int_{B_{\rho_k}}\chi_{\{v_{k+1}>0\}}\Big)^{1-\frac{2}{q}}\\
&\leq&C_2^{k+1}E_k^{1-\frac{2}{q}}.
\end{eqnarray*}
where $C_2>1$ depends on $q$ and we used the assumption $\|F\|_{L^q(B_R)}\leq1$. Putting together the information above and choosing $\varepsilon>0$ small enough to reabsorb the gradient term in the left hand side, we finally obtain the existence of a constant $C_0>1$ (depending on $p,q,\tau$) such that
\begin{equation*}
\int_{B_{\rho_k}}|\nabla(\eta_k v_{k+1})|^2\leq \frac{C_0^{k+1}}{(R-r)^2}(E_k+E_k^{2-\frac{2}{\tau}-\frac{2}{p}}+E_k^{1-\frac{2}{q}})
\end{equation*}
From the other side, using the H\"older inequality with exponents $\frac{\tau}{2}$ and $\frac{\tau}{\tau-2}$ (recall that $\tau>2$), and using again the Sobolev inequality \eqref{sobolev}, we get
\begin{eqnarray*}
E_{k+1}&=&\int_{D_{k+1}}v_{k+1}^2\\
&\leq& \Big(\int_{D_{k+1}}|v_{k+1}|^{\tau}\Big)^{\frac{2}{\tau}} \Big(\int_{D_{k+1}}\chi_{\{v_{k+1}>0\}}\Big)^{\frac{\tau-2}{\tau}}\\
&\leq& \Big(\int_{B_{\rho_k}}|\eta_kv_{k+1}|^{\tau}\Big)^{\frac{2}{\tau}} C_3^{k+1}E_k^{\frac{\tau-2}{\tau}}\\
&\leq& \frac{\tilde C^{k+1}}{(R-r)^2}E_k(E_k^{1-\frac{2}{\tau}}+E_k^{2-\frac{4}{\tau}-\frac{2}{p}}+E_k^{1-\frac{2}{\tau}-\frac{2}{q}})\leq \frac{\tilde C^{k+1}}{(R-r)^2}E_k^{1+\gamma}
\end{eqnarray*}
where $C_3,\tilde C>1$ and
\begin{equation*}
\gamma:=\min\left\{1-\frac{2}{\tau},2-\frac{4}{\tau}-\frac{2}{p},1-\frac{2}{\tau}-\frac{2}{q}\right\}>0.
\end{equation*}
Notice that the positivity of $\gamma$ holds true when $n\geq3$ since $\tau=2^*$ and since $p>n/2$ and $q>n$. Moreover, if $n=2$ we know that $p>1$ and $q>2$ so that the latter is true by choosing $\tau>\frac{2p}{p-1}$ and $\tau>\frac{2q}{q-2}$. Notice that, in estimating with the smallest exponent, we also use the fact that $E_k\leq\delta<1$. Then, iterating the inequality
\begin{equation*}
\begin{cases}
E_{k+1}\leq \frac{\tilde C^{k+1}}{(R-r)^2}E_{k}^{1+\gamma}\\
E_0\leq\delta,
\end{cases}
\end{equation*}
we get
\begin{equation*}
E_k\leq \frac{\tilde C^{\sum_{i=0}^ki(1+\gamma)^{k-i}}}{(R-r)^{2\sum_{i=0}^k(1+\gamma)^{k-i}}}E_0^{(1+\gamma)^k}\leq\left(\frac{\tilde C^{\sum_{i=0}^{k}\frac{i}{(1+\gamma)^i}}}{(R-r)^{2\sum_{i=0}^{k}\frac{1}{(1+\gamma)^i}}}\delta\right)^{(1+\gamma)^k}.
\end{equation*}
So, since $\sum_{i=0}^{+\infty}\frac{i}{(1+\gamma)^i}$ and $\sum_{i=0}^{+\infty}\frac{1}{(1+\gamma)^i}$ are convergent and given $S_1,S_2$ their sums, we can find a small $\delta\in(0,1)$ such that
$$\frac{\tilde C^{S_1}}{(R-r)^{2S_2}}\delta<1,$$
so that $E_k\to0$ as $k\to\infty$. By dominated convergence theorem this implies that
$$
\lim_{k\to\infty}\int_{B_R}(u-b_k)^2_+\chi_{D_k}=\int_{B_r}(u-1)_+^2=0;
$$
that is $u\leq1$ almost everywhere in $B_r$.\\

\noindent{\bf Step 3: normalization.}
Let us define
$$v=\theta u,\qquad\mathrm{with}\qquad \theta=\frac{\sqrt\delta}{\|u\|_{L^2(B_R)}+\|f\|_{L^p(B_R)}+\|F\|_{L^q(B_R)}},$$
and with $\delta\in(0,1)$ such that {\bf Step 2} works on $v_+,v_-$. Then $|v|\leq1$ in $B_r$ so that
$$\|u\|_{L^\infty(B_r)}\leq\frac{1}{\sqrt\delta}(\|u\|_{L^2(B_R)}+\|f\|_{L^p(B_R)}+\|F\|_{L^q(B_R)}).$$

\end{proof}

\section{A priori $C^{0,\alpha}$ estimates, a priori $C^{1,\alpha}$ estimates}\label{s:5}
The aim of this section is to provide a priori regularity estimates for solutions by a scaling argument which involves a blow-up procedure and the use of the Liouville theorem in the previous section. This procedure requires the coefficients to be at least continuous in order to end up with a constant coefficient matrix after blow-up and make use of the polynomial Liouville Theorem \ref{t:liouville}. We would like to stress that the De Giorgi-Nash-Moser theorem (which is not treated in these notes) proves local $\alpha$-H\"older continuity of solutions just requiring bounded measurable coefficients. However, even in the case of a zero right hand side, the exponent $\alpha$ is not allowed to be any real number in $(0,1)$, but has an implicit upper bound which depends on the ellipticity ratio $\lambda/\Lambda$, and this is optimal for general bounded measurable coefficients. As we will see, assuming additionally the continuity of coefficients, this threshold is removed.

\subsection{A priori $C^{0,\alpha}$ estimates}
In the next result we are going to assume continuity of the coefficients in $\overline{B_R}$ for a given $R>0$; that is, uniform continuity. So we can assume that there exists a modulus of continuity $\omega$ such that
\begin{equation*}
\|A\|_{C^{0,\omega(\cdot)}(B_R)}=\|A\|_{L^{\infty}(B_R)}+\sup_{\substack{x,y\in B_R \\ x\neq y}}\frac{|A(x)-A(y)|}{\omega(|x-y|)}<\infty.
\end{equation*}
The latter expression has to be intended for any component $a_{ij}$.

\begin{Theorem}[A priori $C^{0,\alpha}$ estimates]\label{t:C0alphaPriori}
Let $p>n/2$, $q>n$, $0<r<R$. Let $\alpha\in(0,1)$ such that
\begin{equation*}
\alpha\leq\min\{2-n/p,1-n/q\}.
\end{equation*}
Let $A\in C^{0,\omega(\cdot)}(B_R)$ with $\|A\|_{C^{0,\omega(\cdot)}(B_R)}\leq\overline L$ and $\omega$ is any given modulus of continuity. Then, there exists a constant $C>0$ depending only on $n,p,q,\alpha,r,R$, the ellipticity constants in $B_R$ and $\overline L$ such that
\begin{equation}\label{C0alphaE}
\|u\|_{C^{0,\alpha}(B_{r})}\leq C(\|u\|_{L^2(B_R)}+\|f\|_{L^p(B_R)}+\|F\|_{L^q(B_R)})
\end{equation}
for any weak solution of \eqref{E} in $B_R$ which belongs to $C^{0,\alpha}_\loc(B_R)$.
\end{Theorem}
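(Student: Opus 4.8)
The plan is to argue by contradiction, combining a rescaling and blow-up procedure with the Liouville rigidity of Theorem~\ref{t:liouville}, in the spirit of Simon's work. By an argument analogous to the scaling and covering of Remark~\ref{r:scaling-covering} (rescaling $u,A,f,F$ preserves an equation of the form \eqref{E} with the same ellipticity constants and a modulus of continuity of the coefficients at least as good), it suffices to treat the model case $R=1$, $r=1/2$. Moreover the $L^\infty$ part of the $C^{0,\alpha}$-norm is already controlled by Theorem~\ref{t:Linfty}, so it is enough to bound the seminorm $[u]_{C^{0,\alpha}(B_{1/2})}$. To cope with the fact that a ``worst pair of points'' may approach the boundary of the ball where one has information, I would normalise not by a plain seminorm but by the distance-weighted quantity
\[
\Psi(v):=\sup_{\substack{x,y\in B_{3/4}\\ x\ne y}}\big(\mathrm{dist}(x,\partial B_{3/4})\wedge \mathrm{dist}(y,\partial B_{3/4})\big)^\alpha\,\frac{|v(x)-v(y)|}{|x-y|^\alpha},
\]
which is finite for every $v\in C^{0,\alpha}_\loc(B_1)$ (the weight is bounded and $\overline{B_{3/4}}$ is a compact subset of $B_1$, so $v\in C^{0,\alpha}(\overline{B_{3/4}})$) and, since the weight is $\ge(1/4)^\alpha$ on $B_{1/2}$, satisfies $[v]_{C^{0,\alpha}(B_{1/2})}\le 4^\alpha\Psi(v)$. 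If the estimate failed there would be coefficients $A_k$ (with the prescribed ellipticity and $\|A_k\|_{C^{0,\omega}(B_1)}\le\overline L$), data $f_k,F_k$ and solutions $u_k\in C^{0,\alpha}_\loc(B_1)$ with $\Psi(u_k)/M_k\to\infty$, where $M_k:=\|u_k\|_{L^2(B_1)}+\|f_k\|_{L^p(B_1)}+\|F_k\|_{L^q(B_1)}$ (this uses Theorem~\ref{t:Linfty} together with $[u_k]_{C^{0,\alpha}(B_{1/2})}\le 4^\alpha\Psi(u_k)$); after dividing everything by $\Psi(u_k)$ we may assume $\Psi(u_k)=1$, $M_k\to0$, and by Theorem~\ref{t:Linfty} also $\varepsilon_k:=\|u_k\|_{L^\infty(B_{7/8})}\to0$.

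Next comes the blow-up. I would pick near-maximisers $x_k,y_k\in\overline{B_{3/4}}$ with $\tau_k^\alpha\,|u_k(x_k)-u_k(y_k)|/|x_k-y_k|^\alpha\ge\tfrac12$, where $\tau_k:=\mathrm{dist}(x_k,\partial B_{3/4})\le \mathrm{dist}(y_k,\partial B_{3/4})$, and set $d_k:=|x_k-y_k|$. Combining $|u_k(x_k)-u_k(y_k)|\ge d_k^\alpha/(2\tau_k^\alpha)$ with $|u_k(x_k)-u_k(y_k)|\le2\varepsilon_k$ yields $d_k/\tau_k\le(4\varepsilon_k)^{1/\alpha}\to0$, hence $d_k\to0$ while the blow-up ratio $\tau_k/d_k\to\infty$. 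Then I would consider the renormalised rescalings
\[
w_k(z):=\Big(\tfrac{\tau_k}{2}\Big)^\alpha\,\frac{u_k(x_k+d_k z)-u_k(x_k)}{d_k^\alpha},
\]
defined on $\Omega_k:=d_k^{-1}(B_1-x_k)\supset B_{1/(4d_k)}$ since $|x_k|\le3/4$. Passing to subsequences so that $x_k\to x_0$, $\xi_k:=(y_k-x_k)/d_k\to\xi_0\in S^{n-1}$ and $A_k(x_k)\to A_0$ (a constant matrix with the same ellipticity), one checks: $w_k(0)=0$; $|w_k(\xi_k)|=(\tau_k/2)^\alpha|u_k(y_k)-u_k(x_k)|/d_k^\alpha\ge 2^{-1-\alpha}=:c_0>0$; and, because $\Psi(u_k)=1$ and $\mathrm{dist}(x_k+d_kz,\partial B_{3/4})\ge\tau_k/2$ for $|z|\le\tau_k/(2d_k)$, one gets $[w_k]_{C^{0,\alpha}(B_{\tau_k/(2d_k)})}\le1$ with radii $\tau_k/(2d_k)\to\infty$, whence $|w_k(z)|\le|z|^\alpha$ on balls exhausting $\mathbb R^n$. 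The function $w_k$ solves $-\mathrm{div}(A_k(x_k+d_k\cdot)\nabla w_k)=g_k+\mathrm{div}\,G_k$ with $g_k=(\tau_k/2)^\alpha d_k^{2-\alpha}f_k(x_k+d_k\cdot)$ and $G_k=(\tau_k/2)^\alpha d_k^{1-\alpha}F_k(x_k+d_k\cdot)$, and here the constraint $\alpha\le\min\{2-n/p,\,1-n/q\}$ is exactly what makes the scaling exponents $2-\alpha-n/p$ and $1-\alpha-n/q$ nonnegative, so that (together with $M_k\to0$) $\|g_k\|_{L^p(B_\rho)}\to0$ and $\|G_k\|_{L^q(B_\rho)}\to0$ for every fixed $\rho$; moreover $A_k(x_k+d_k\cdot)\to A_0$ in $L^\infty_\loc$, since $\mathrm{osc}_{B_\rho}A_k(x_k+d_k\cdot)\le\overline L\,\omega(2d_k\rho)\to0$.

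Finally I would pass to the limit. Using $|w_k(z)|\le|z|^\alpha$ and a Caccioppoli-type energy estimate (testing with $\eta^2 w_k$; the uniform bound on $w_k$ absorbs the $L^p$ forcing term since $L^p\subset L^1_\loc$, while the field term is handled as in Proposition~\ref{p:caccioppoli}), the sequence $(w_k)$ is bounded in $C^{0,\alpha}_\loc(\mathbb R^n)$ and in $H^1_\loc(\mathbb R^n)$; by Arzel\`a--Ascoli and a diagonal argument, up to a subsequence $w_k\to w_\infty$ locally uniformly and weakly in $H^1_\loc$. Then $w_\infty$ is an entire weak solution of $-\mathrm{div}(A_0\nabla w_\infty)=0$ in $\mathbb R^n$ (the lower-order terms vanish in the limit and $A_k(x_k+d_k\cdot)\to A_0$ uniformly on compact sets), with $w_\infty(0)=0$, $|w_\infty(\xi_0)|\ge c_0>0$, and $|w_\infty(z)|\le|z|^\alpha$. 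By the constant-coefficient version of the Liouville Theorem~\ref{t:liouville}, $w_\infty$ is a polynomial of degree at most $\lfloor\alpha\rfloor=0$, i.e.\ constant; since $w_\infty(0)=0$ this forces $w_\infty\equiv0$, contradicting $|w_\infty(\xi_0)|\ge c_0$. I expect the main obstacle to be precisely the bookkeeping around the weighted seminorm $\Psi$: one must ensure it is finite a priori (which is where the qualitative hypothesis $u\in C^{0,\alpha}_\loc$ enters) and controls the target seminorm, that the near-maximisers force $\tau_k/d_k\to\infty$ so the limit is genuinely entire, and that the renormalisation factor $(\tau_k/2)^\alpha$ is tuned so the limit is neither trivial nor of infinite seminorm — once this is in place, the Liouville theorem closes the argument.
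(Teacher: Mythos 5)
Your proof is correct, and it takes a genuinely different route from the paper at the one place where some device is needed to handle the fact that the bad pair of points may drift towards the boundary of the ball where you have information. The paper multiplies $u_k$ by a fixed cut-off $\eta$, sets $M_k := [\eta u_k]_{C^{0,\alpha}(B_1)}$ (finite because $u_k\in C^{0,\alpha}_{\loc}$ and $\eta$ has compact support), and then is forced to introduce \emph{two} blow-up sequences: a cut-off sequence $v_k$, which carries the equi-H\"older bound $[v_k]_{C^{0,\alpha}}\le 1$ and the non-degeneracy $|v_k(0)-v_k(\xi_k)|\ge 1/2$, and a cut-off-free sequence $w_k$, which inherits a clean rescaled PDE; one must then check that $v_k-w_k\to 0$ locally uniformly, which uses the $L^\infty$ bound and $r_k^{1-\alpha}\to 0$. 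Your distance-weighted seminorm $\Psi$ plays the same normalising role as $M_k$ (finite for the same qualitative reason: $\overline{B_{3/4}}\subset\subset B_1$) but is invariant under the interior rescalings, so a \emph{single} blow-up $w_k$ simultaneously carries the equi-H\"older bound on the expanding balls $B_{\tau_k/(2d_k)}$, the non-degeneracy at $\xi_k$, and the rescaled PDE; the comparison step disappears. The price is a little extra bookkeeping: you must track the second small parameter $\tau_k$, show $\tau_k/d_k\to\infty$ so that the blow-up is genuinely entire (you do this correctly from $(\tau_k/d_k)^\alpha\ge 1/(4\varepsilon_k)$ using the $L^\infty$ bound), and choose the renormalisation factor $(\tau_k/2)^\alpha$ compatibly. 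Both routes hit the same scaling exponents $2-\alpha-n/p$ and $1-\alpha-n/q$ on the forcing terms, both use the same Caccioppoli-type energy bound tested against $\eta^2 w_k$ to get local $H^1$-weak convergence of the blow-up, and both close with the constant-coefficient Liouville theorem with sublinear growth. Your argument is the more streamlined of the two; the paper's is perhaps more elementary in that it only ever manipulates the plain $C^{0,\alpha}$ seminorm of a compactly supported function and does not require introducing a weighted norm.
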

As usual, the constant in the above estimate explodes as $R-r\to0$.

The proof we propose is based on a contradiction argument which involves scalings and blow-ups, in the spirit of Simon's approach in \cite{Sim97}. The idea is that, having continuous coefficients, the blow-up procedure is a zooming around points which leave in a compact set, and so up to select a subsequence one can compare the behaviour of solutions to variable coefficient PDEs with solutions with constant coefficients, which are regular and quite rigid.

\begin{proof}[Proof of Theorem \ref{t:C0alphaPriori}]
Wlog we take $r=1/2$ and $R=1$. We divide the proof into four steps.\\

\noindent{\bf Step 1: the contradiction argument.}
Let us suppose by contradiction the existence of sequences of data $A_k,f_k,F_k$ and of associated solutions $u_k$ such that
\begin{equation*}
\|u_k\|_{C^{0,\alpha}(B_{1/2})}> k(\|u_k\|_{L^2(B_1)}+\|f_k\|_{L^p(B_1)}+\|F_k\|_{L^q(B_1)}):=kI_k,
\end{equation*}
with $p,q,\alpha$ as in the statement and $A_k$ have the same uniform ellipticity constants $\lambda,\Lambda,L$ in $B_1$ and the same common modulus of continuity $\omega$; that is, they are equibounded and equicontinuous with
\begin{equation*}
\|A_k\|_{C^{0,\omega(\cdot)}(B_1)}=\|A_k\|_{L^{\infty}(B_1)}+\sup_{\substack{x,y\in B_1 \\ x\neq y}}\frac{|A_k(x)-A_k(y)|}{\omega(|x-y|)}\leq \overline L.
\end{equation*}
The latter expression has to be intended for any component $a_{ij}^k$. 

Let us remark that, from now on, we may pass to subsequences multiple times within this proof. This is not restrictive, as long as a contradiction is reached along at least one particular subsequence.

By Theorem \ref{t:Linfty} we know that for any $0<r<1$ there exists a constant $c(r)>0$ (remember that $c(r)$ may explode as $r\to1^-$) such that
\begin{equation*}
\|u_k\|_{L^\infty(B_r)}\leq c(r)I_k.
\end{equation*}
Then,
\begin{equation*}
\|u_k\|_{C^{0,\alpha}(B_{1/2})}=\|u_k\|_{L^\infty(B_{1/2})}+[u_k]_{C^{0,\alpha}(B_{1/2})}\leq c(1/2)I_k+[u_k]_{C^{0,\alpha}(B_{1/2})}.
\end{equation*}
Let us consider a radially decreasing cut-off function $\eta\in C^\infty_c(B_{3/4})$ with $\eta\equiv1$ in $B_{1/2}$ and $0\leq\eta\leq1$. Then
\begin{equation*}
M_k:=[\eta u_k]_{C^{0,\alpha}(B_{1})}\geq [u_k]_{C^{0,\alpha}(B_{1/2})}\geq (k-c(1/2))I_k\geq \frac{k}{2}I_k,
\end{equation*}
for $k$ big enough. This in particular gives that $I_k\leq 2k^{-1}M_k$. By definition of supremum, there exist two sequences of points $x_k,y_k\in B_1$ (blow-up points) such that $x_k\neq y_k$ and
\begin{equation*}
\frac{|\eta u_k(x_k)-\eta u_k(y_k)|}{|x_k-y_k|^\alpha}\geq \frac{M_k}{2}.
\end{equation*}
Since $\overline{\mathrm{supp}\eta}\subset B_s\subset B_{3/4}$ (for some $s\in(1/2,3/4)$) we can assume up to relabeling that $x_k\in B_s\subset B_{3/4}$. Then, actually also $y_k$ belongs to $B_{3/4}$. In fact, if $y_k\in B_{3/4}^c$, by taking the segment connecting $x_k$ and $y_k$ one could take the point $\hat y_k$ on the segment which lies on $\partial B_s$. We observe that $|x_k-\hat y_k|<|x_k-y_k|$ and $|\eta u_k(x_k)-\eta u_k(\hat y_k)|=|\eta u_k(x_k)-\eta u_k(y_k)|=|\eta u_k(x_k)|$, so one could take $\hat y_k$ in place of $y_k$. Let us call $r_k=|x_k-y_k|$ the distance between the blow-up points. We have
\begin{equation*}
\|u_k\|_{L^\infty(B_{3/4})}\leq c(3/4)I_k\leq 2c(3/4)\frac{M_k}{k}\leq \frac{4c(3/4)}{k}\frac{|\eta u_k(x_k)-\eta u_k(y_k)|}{|x_k-y_k|^\alpha}\leq \frac{8c(3/4)}{kr_k^{\alpha}}\|u_k\|_{L^\infty(B_{3/4})}.
\end{equation*}
Hence $r_k\leq ck^{-1/\alpha}\to0$, i.e. the blow-up points are collapsing in the limit.\\

\noindent{\bf Step 2: the blow-up sequences.} Let us define two blow-up sequences

\begin{equation*}
v_k(x)=\frac{\eta u_k(x_k+r_kx)-\eta u_k(x_k)}{M_kr_k^\alpha},\qquad w_k(x)=\eta(x_k)\frac{u_k(x_k+r_kx)- u_k(x_k)}{M_kr_k^\alpha}.
\end{equation*}
They are well defined as long as $x_k+r_kx\in B_1$; that is, $$x\in\Omega_k:=\frac{B_1-x_k}{r_k},$$
which are called blow-up domains. From one side, we want to prove that the $v_k$s enjoy some equi-H\"older continuity which gives compactness and some regularity and growth properties of the limit. From the other side, we want to show that the $w_k$s have the same asymptotic behaviour (i.e. same limit of the $v_k$s) and they solve some rescaled equations which bring an equation to the limit too. Since $x_k\in B_{3/4}$, the blow-up domains are exhausting the whole of $\R^n$; that is
$$\Omega_\infty=\{x\in\R^n \ \mathrm{such \  that \ there \ exists \ } \overline k \ \mathrm{such \  that \ } x\in\Omega_k, \ \forall k\geq\overline k  \}=\R^n.$$
In fact, given $x_0\in\R^n$
\begin{equation*}
|x_k+r_kx_0|\leq |x_k|+r_k|x_0|<\frac{3}{4}+r_k|x_0|<1
\end{equation*}
for any $k\geq\overline k$ with $\overline k$ depending on $|x_0|$ since $r_k\to0$. Now we derive some properties of the blow-up sequences. Let $x,y\in\Omega_k$, then
\begin{equation}\label{equiholder}
|v_k(x)-v_k(y)|=\frac{|\eta u_k(x_k+r_kx)-\eta u_k(x_k+r_ky)|}{M_kr_k^\alpha}\leq|x-y|^\alpha.
\end{equation}
Hence $[v_k]_{C^{0,\alpha}(B_R)}\leq1$ for any $R>0$ (since $B_R$ is definitively contained in any $\Omega_k$). Moreover since $v_k(0)=0$, we have
$$\|v_k\|_{L^\infty(B_R)}=\sup_{x\in B_R}|v_k(x)-v_k(0)|\leq |x|^\alpha\leq R^\alpha.$$
Given the compact set $\overline{B_R}\subset\R^n$, the sequence $v_k$ is equibounded and equicontinuous on $\overline{B_R}$ (actually equi $\alpha$-H\"older continuous), then by the Ascoli-Arzel\'a theorem it converges (up to pass to a subsequence) uniformly in $\overline{B_R}$ to some limiting profile $v$. The convergence is in $C^{0,\beta}(B_R)$ for any $0<\beta<\alpha$. By an exhaustion of $\R^n$ with countably many compact sets $\overline{B_R}$, and a diagonal argument along subsequences, one can select a unique limiting profile $v$ defined in the whole of $\R^n$. Moreover we observe that
\begin{equation*}
|v_k(0)-v_k(\xi_k)|\geq \frac{1}{2},\qquad\mathrm{with \ }\xi_k=\frac{y_k-x_k}{r_k}\in \mathbb S^{n-1}
\end{equation*}
and up to subsequences $\xi_k\to\xi\in \mathbb S^{n-1}$. Hence, by uniform convergence on compact sets we have
\begin{equation*}
|v(0)-v(\xi)|\geq \frac{1}{2};
\end{equation*}
that is, $v$ is not constant. Moreover for any $x\in\R^n$, again by the uniform convergence, the fact that $v(0)=0$ and the equi-H\"older continuity of the $v_k$s \eqref{equiholder}, one has that $v$ has sublinear growth
\begin{equation}\label{sublinear}
|v(x)|\leq|x|^\alpha.
\end{equation}
The two sequences converge to the same limit uniformly on compact sets, since taken $x\in \overline{B_R}\subset\R^n$
\begin{eqnarray*}
|v_k(x)-w_k(x)|&=&\frac{|\eta(x_k+r_kx)-\eta(x_k)|\cdot|u_k(x_k+r_kx)|}{M_kr_k^\alpha}\\
&\leq&\frac{\ell r_k|x|\cdot \|u_k\|_{L^\infty(B_{4/5})}}{M_kr_k^\alpha}\leq c(4/5)Rr_k^{1-\alpha}k^{-1}\to0.
\end{eqnarray*}
This says also that $w_k$ converges to the same limit $v$ uniformly on any $\overline{B_R}$.\\

\noindent{\bf Step 3: the limiting profile is entire "harmonic".}
Along a subsequence, $x_k,y_k\to \overline x$. Moreover, the equicontinuity and equiboundedness of $A_k$s give their uniform convergence (up to subsequences) on compact sets of $\R^n$ to a constant coefficient uniformly elliptic matrix; that is,
\begin{equation*}
A_k(x_k+r_kx)\to \overline A=\overline A(\overline x).
\end{equation*}
Then, given a test function $\phi\in C^\infty_c(\R^n)$, its support will be contained in a possibly large ball; that is, $\mathrm{supp}\phi\subset B_R\subset\Omega_k$ definitively. Then
\begin{eqnarray}\label{eqwk}
\int_{B_R}A_k(x_k+r_kx)\nabla w_k(x)\cdot\nabla\phi(x)&=&\frac{r_k^{2-\alpha}\eta(x_k)}{M_k}\int_{B_R}f_k(x_k+r_kx)\phi(x)\nonumber\\
&&-\frac{r_k^{1-\alpha}\eta(x_k)}{M_k}\int_{B_R}F_k(x_k+r_kx)\cdot\nabla\phi(x)=T^1_k+T^2_k.
\end{eqnarray}
First
\begin{eqnarray*}
\left|\int_{B_R}f_k(x_k+r_kx)\phi(x)dx\right|&\leq& \|\phi\|_{L^\infty(\mathrm{supp}\phi)}\int_{B_R}|f_k(x_k+r_kx)|dx\\
&=&\|\phi\|_{L^\infty(\mathrm{supp}\phi)}\int_{B_{r_kR}(x_k)}|f_k(y)|r_k^{-n}dy.
\end{eqnarray*}
Here $y=x_k+r_kx\in B_{r_kR}(x_k)\subset B_1$. Hence, remembering that $I_k\leq 2k^{-1}M_k$,
\begin{equation*}
|T^1_k|\leq\frac{r_k^{2-\alpha-n}}{M_k}\|\phi\|_{L^\infty(\mathrm{supp}\phi)}\|f_k\|_{L^p(B_1)}|B_{r_kR}(x_k)|^{1-\frac{1}{p}}\leq c\|\phi\|_{L^\infty(\mathrm{supp}\phi)}r_k^{2-\frac{n}{p}-\alpha}k^{-1}\to0
\end{equation*}
since $\alpha\leq 2-n/p$. Similarly
\begin{eqnarray*}
\left|\int_{B_R}F_k(x_k+r_kx)\cdot\nabla\phi(x)dx\right|&\leq& \|\nabla\phi\|_{L^2(\mathrm{supp}\phi)}\left(\int_{B_R}|F_k(x_k+r_kx)|^2dx\right)^{1/2}\\
&=&\|\nabla\phi\|_{L^2(\mathrm{supp}\phi)}\left(\int_{B_{r_kR}(x_k)}|F_k(y)|^2r_k^{-n}dy\right)^{1/2}\\
&\leq&c\|\nabla\phi\|_{L^2(\mathrm{supp}\phi)}\|F_k\|_{L^q(B_1)}r_k^{-\frac{n}{q}}.
\end{eqnarray*}
Hence
\begin{equation*}
|T^2_k|\leq c\|\nabla\phi\|_{L^2(\mathrm{supp}\phi)}r_k^{1-\frac{n}{q}-\alpha}k^{-1}\to0
\end{equation*}
since $\alpha\leq 1-n/q$. This in particular means that there exists a sequence $\delta_k\to0$ such that
\begin{equation*}
\int_{\mathrm{supp}\phi}A_k(x_k+r_kx)\nabla w_k(x)\cdot\nabla\phi(x)\leq \delta_k(\|\phi\|_{L^\infty(\mathrm{supp}\phi)}+\|\nabla\phi\|_{L^2(\mathrm{supp}\phi)}).
\end{equation*}
Let us choose as test function $\phi=\eta^2w_k$ where $\eta\in C^\infty_c(B_{2R})$ (for a given $R>0$) is a radially decreasing cut-off function with $0\leq\eta\leq1$, $\eta\equiv1$ in $B_R$ and $|\nabla\eta|\leq 2R^{-1}$. Notice that $w_k\in H^1(\Omega_k)$ and hence $\eta^2w_k\in H^1_0(B_{2R})$ (since $B_{2R}\subset\Omega_k$). Hence, by similar computations as in the proof of the Caccioppoli inequality (see Proposition \ref{p:caccioppoli}), one ends up with ($\varepsilon>0$ to be announced)

\begin{eqnarray*}
\int_{B_{2R}}A_k(x_k+r_kx)\nabla(\eta w_k)\cdot\nabla(\eta w_k)&\leq& C_0\varepsilon\int_{B_{2R}}|\nabla(\eta w_k)|^2+\frac{C_0}{\varepsilon}\int_{B_{2R}}w_k^2|\nabla\eta|^2+\Lambda\int_{B_{2R}}w_k^2|\nabla\eta|^2\\
&&+\delta_k\|\eta^2w_k\|_{L^\infty(B_{2R})}+\delta_k\left(\int_{B_{2R}}|\nabla(\eta^2w_k)|^2\right)^{1/2}\\
&\leq&C_0\varepsilon\int_{B_{2R}}|\nabla(\eta w_k)|^2+\frac{1}{R^2}\left(\frac{C_0}{\varepsilon}+\Lambda\right)\int_{B_{2R}}w_k^2\\
&&+\delta_k\|w_k\|_{L^\infty(B_{2R})}+\sqrt 2\delta_k\left(\int_{B_{2R}}|\nabla(\eta w_k)|^2+\frac{1}{R^2}\int_{B_{2R}}w_k^2\right)^{1/2}\\
&\leq&C(\varepsilon+\delta_k)\int_{B_{2R}}|\nabla(\eta w_k)|^2+\frac{C}{\varepsilon}\|w_k\|^2_{L^\infty(B_{2R})}+C.
\end{eqnarray*}
Then choosing $\varepsilon>0$ small enough, there exists a uniform in $k$ contant $c>0$ such that
\begin{equation}\label{boundwk}
\int_{B_{R}}|\nabla w_k|^2\leq C\|w_k\|^2_{L^\infty(B_{2R})}+C\leq c,
\end{equation}
since $w_k$s are uniformly converging on compact sets.
Then, up to further subsequences, the convergence $w_k\to v$ is also weak in $H^1_\loc(\R^n)$ (i.e. on any compact of $\R^n$). Thus, $v\in H^1_\loc(\R^n)$. Now we are going to prove that actually $v$ is entire $\overline A$-harmonic in $\R^n$; that is, for any $\phi\in C^\infty_c(\R^n)$
\begin{equation*}
\int_{\R^n}\overline A\nabla v\cdot\nabla\phi=0.
\end{equation*}
However, going back to \eqref{eqwk}, we already know that the right hand side is vanishing, se we just need to prove that
\begin{equation*}
\int_{\R^n}A_k(x_k+r_kx)\nabla w_k\cdot\nabla\phi\to\int_{\R^n}\overline A\nabla v\cdot\nabla\phi.
\end{equation*}
Then
\begin{equation*}
\int_{\R^n}A_k(x_k+r_kx)\nabla w_k\cdot\nabla\phi=\int_{\R^n}(A_k(x_k+r_kx)-\overline A)\nabla w_k\cdot\nabla\phi+\int_{\R^n}\overline A\nabla w_k\cdot\nabla\phi.
\end{equation*}
The first term in the right hand side is vanishing using the uniform boundedness \eqref{boundwk} together with the equicontinuity of $A_k$s (since $|x_k+r_kx-\overline x|<<1$ uniformly on $\mathrm{supp}\phi$). Then the second term of the right hand side converges to the desired one by weak convergence in $H^1_\loc$.\\

\noindent{\bf Step 4: the conclusion by the Liouville theorem.}
Summing up all the information obtained, we have a limiting profile $v$ which is $H^1_\loc(\R^n)$, and entire $\overline A$-harmonic. Moreover $v$ is not constant since $|v(0)-v(\xi)|\geq 1/2$ and its growth is sublinear by \eqref{sublinear}. Then, this is in contradiction with the Liouville Theorem \ref{t:liouville}.
\end{proof}

\subsection{A priori $C^{1,\alpha}$ estimates}
We would like to acknowledge \cite{SoaTer19} for the proof of the result below, in the spirit of Simon's proof in \cite{Sim97}.

\begin{Theorem}[A priori $C^{1,\alpha}$ estimates]\label{t:C1alphaPriori}
Let $p>n$, $0<r<R$. Let $\alpha\in(0,1)$ such that
\begin{equation*}
\alpha\leq 1-n/p.
\end{equation*}
Let $A\in C^{0,\alpha}(B_R)$ with $\|A\|_{C^{0,\alpha}(B_R)}\leq\overline L$.
Then, there exists a constant $C>0$ depending only on $n,p,\alpha,r,R$, the ellipticity constants in $B_R$ and $\overline L$ such that
\begin{equation}\label{C1alphaE}
\|u\|_{C^{1,\alpha}(B_{r})}\leq C(\|u\|_{L^2(B_R)}+\|f\|_{L^p(B_R)}+\|F\|_{C^{0,\alpha}(B_R)})
\end{equation}
for any weak solution of \eqref{E} in $B_R$ which belongs to $C^{1,\alpha}_\loc(B_R)$.
\end{Theorem}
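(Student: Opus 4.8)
The plan is to run the same contradiction–scaling–blow-up scheme as in the proof of Theorem \ref{t:C0alphaPriori}, but performing a \emph{first order} blow-up (subtracting the tangent plane), and then applying the Liouville Theorem \ref{t:liouville} at the growth exponent $1+\alpha<2$. Normalizing to $r=1/2$, $R=1$, suppose there were solutions $u_k\in C^{1,\alpha}_{\loc}(B_1)$ of \eqref{E} with data $A_k$ (fixed ellipticity constants, $\|A_k\|_{C^{0,\alpha}(B_1)}\le\overline L$), $f_k$, $F_k$, such that $\|u_k\|_{C^{1,\alpha}(B_{1/2})}>k\,I_k$, with $I_k:=\|u_k\|_{L^2(B_1)}+\|f_k\|_{L^p(B_1)}+\|F_k\|_{C^{0,\alpha}(B_1)}$. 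By Theorem \ref{t:Linfty} (taking any $q>n$ and bounding $\|F_k\|_{L^q}\lesssim\|F_k\|_{C^{0,\alpha}}$) one has $\|u_k\|_{L^\infty(B_s)}\le c(s)\,I_k$ for every $s<1$; combining this with the standard interpolation inequality $\|\nabla g\|_{L^\infty(\Omega)}\le\varepsilon[\nabla g]_{C^{0,\alpha}(\Omega)}+C\varepsilon^{-1/\alpha}\|g\|_{L^\infty(\Omega)}$ applied on $B_{1/2}$ to $g=u_k$ with $\varepsilon$ fixed, one reduces the blow-up quantity to the gradient seminorm: $[\nabla u_k]_{C^{0,\alpha}(B_{1/2})}\gtrsim k\,I_k$ for $k$ large. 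Fix a cut-off $\eta\in C^\infty_c(B_1)$, $0\le\eta\le1$, $\eta\equiv1$ on $B_{1/2}$, $\mathrm{supp}\,\eta\subset B_s$ with $s\in(1/2,1)$, and set $M_k:=[\nabla(\eta u_k)]_{C^{0,\alpha}(B_1)}$, so $M_k\ge[\nabla u_k]_{C^{0,\alpha}(B_{1/2})}\gtrsim k\,I_k$, i.e. $I_k=o(M_k)$ (more precisely $I_k\lesssim k^{-1}M_k$). A second use of the interpolation inequality with $g=\eta u_k$ on $B_1$, together with $\|\eta u_k\|_{L^\infty(B_1)}\le\|u_k\|_{L^\infty(B_s)}\le c(s)I_k$ and an optimization in $\varepsilon$, yields
$$\|\nabla(\eta u_k)\|_{L^\infty(B_1)}\le\sigma_k M_k,\qquad \sigma_k\to0 .$$
Choosing near-maximizing points $x_k\ne y_k$ for $M_k$, which — after projecting $y_k$ onto $\partial(\mathrm{supp}\,\eta)$ and relabelling as in the proof of Theorem \ref{t:C0alphaPriori}, using that $\nabla(\eta u_k)$ and $\nabla\eta$ vanish on $B_1\setminus\mathrm{supp}\,\eta$ — may be taken in $\mathrm{supp}\,\eta\subset B_s$, and setting $r_k:=|x_k-y_k|$, the bound above forces $r_k\to0$ (from $M_k/2\le r_k^{-\alpha}|\nabla(\eta u_k)(x_k)-\nabla(\eta u_k)(y_k)|\le 2r_k^{-\alpha}\sigma_k M_k$); hence the blow-up domains $\Omega_k:=(B_1-x_k)/r_k$ exhaust $\R^n$.

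Next I would introduce the two blow-up sequences, now subtracting the first-order Taylor polynomial at $x_k$,
$$v_k(x):=\frac{(\eta u_k)(x_k+r_kx)-(\eta u_k)(x_k)-r_k\nabla(\eta u_k)(x_k)\cdot x}{M_k r_k^{1+\alpha}},\qquad w_k(x):=\eta(x_k)\,\frac{u_k(x_k+r_kx)-u_k(x_k)-r_k\nabla u_k(x_k)\cdot x}{M_k r_k^{1+\alpha}} .$$
By construction $v_k(0)=0$, $\nabla v_k(0)=0$, and $[\nabla v_k]_{C^{0,\alpha}(B_R)}\le1$ whenever $B_R\subset\Omega_k$, whence $|\nabla v_k(x)|\le|x|^\alpha$ and $|v_k(x)|\le\tfrac{|x|^{1+\alpha}}{1+\alpha}$; moreover $|\nabla v_k(0)-\nabla v_k(\xi_k)|\ge\tfrac12$ with $\xi_k:=(y_k-x_k)/r_k\in\mathbb S^{n-1}$. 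By Ascoli–Arzel\`a (for $v_k$ and $\nabla v_k$) and a diagonal argument, along a subsequence $v_k\to v$ in $C^1_{\loc}(\R^n)$, with $v(0)=0$, $\nabla v(0)=0$, $|v(x)|\le\tfrac{|x|^{1+\alpha}}{1+\alpha}$, and $|\nabla v(0)-\nabla v(\xi)|\ge\tfrac12$ for some $\xi\in\mathbb S^{n-1}$: in particular $v$ is not affine and has polynomial growth of order $1+\alpha$. A direct computation shows $v_k-w_k\to0$ uniformly on compact sets — their difference only collects the second-order remainder of $\eta$ and the term $\nabla\eta(x_k)\cdot x\,(u_k(x_k+r_k\cdot)-u_k(x_k))$, both $o(M_k r_k^{1+\alpha})$ on compact sets because $\|u_k\|_{L^\infty(B_s)}=o(M_k)$, $u_k$ is uniformly H\"older on $B_s$ (Theorem \ref{t:C0alphaPriori}), and $r_k\to0$ — so $w_k\to v$ as well.

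The core is to show that $v$ is entire $\overline A$-harmonic for a constant uniformly elliptic $\overline A$. Rescaling \eqref{E} for $u_k$ and using $\nabla w_k=\tfrac{\eta(x_k)}{M_k r_k^\alpha}\big(\nabla u_k(x_k+r_k\cdot)-\nabla u_k(x_k)\big)$, one checks that $w_k$ solves in $\Omega_k$ an equation $-\mathrm{div}(A_k(x_k+r_k\cdot)\nabla w_k)=\tilde f_k+\mathrm{div}\,\tilde F_k$, where: $\|\tilde f_k\|_{L^p(B_R)}\lesssim k^{-1}r_k^{1-\alpha-n/p}\to0$ (this is exactly where $\alpha\le1-n/p$ enters); the part of $\tilde F_k$ coming from $F_k$ equals $\tfrac{\eta(x_k)}{M_k r_k^\alpha}\big(F_k(x_k+r_k\cdot)-F_k(x_k)\big)$, of size $\lesssim[F_k]_{C^{0,\alpha}}/M_k\le I_k/M_k\to0$; and the genuinely new term, produced by subtracting the affine part, is $\tfrac{\eta(x_k)}{M_k r_k^\alpha}\big(A_k(x_k+r_k\cdot)-A_k(x_k)\big)\nabla u_k(x_k)$, whose size on $B_R$ is $\le \overline L R^\alpha\,\tfrac{|\eta(x_k)\nabla u_k(x_k)|}{M_k}$. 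This last quantity is the main obstacle: a priori it is only \emph{bounded}, which would leave $v$ solving a non-homogeneous equation and break the Liouville step. The key observation is the identity $\eta(x_k)\nabla u_k(x_k)=\nabla(\eta u_k)(x_k)-u_k(x_k)\nabla\eta(x_k)$, whose right-hand side is $o(M_k)$: indeed $|\nabla(\eta u_k)(x_k)|\le\|\nabla(\eta u_k)\|_{L^\infty(B_1)}\le\sigma_k M_k$ by the interpolation bound, while $|u_k(x_k)\nabla\eta(x_k)|\le\|\nabla\eta\|_\infty\|u_k\|_{L^\infty(B_s)}=o(M_k)$. Hence all right-hand sides tend to $0$. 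Coupling this with a Caccioppoli estimate on the rescaled equation (as in the proof of Theorem \ref{t:C0alphaPriori}), which gives a uniform $H^1_{\loc}$ bound and hence $w_k\rightharpoonup v$ weakly in $H^1_{\loc}(\R^n)$, and with the uniform convergence $A_k(x_k+r_k\cdot)\to\overline A:=\overline A(\overline x)$ along the subsequence $x_k\to\overline x$ (equicontinuity and equiboundedness of the $A_k$), we pass to the limit and obtain $\int_{\R^n}\overline A\nabla v\cdot\nabla\phi=0$ for all $\phi\in C^\infty_c(\R^n)$. Since $1+\alpha<2$, the Liouville Theorem \ref{t:liouville} (valid for constant uniformly elliptic coefficients as well) forces $v$ to be affine, contradicting $|\nabla v(0)-\nabla v(\xi)|\ge\tfrac12$, and this contradiction proves \eqref{C1alphaE}. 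I expect the two genuinely new difficulties — compared with Theorem \ref{t:C0alphaPriori} — to be the localization/relabelling that places the blow-up point where the interpolation bound on $\nabla(\eta u_k)$ can be used, and the verification just outlined that the affine-subtraction term is negligible in the limit.
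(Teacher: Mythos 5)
Your proof is correct and follows the same overall contradiction--scaling--blow-up--Liouville scheme as the paper, but you streamline two steps in a genuinely different way. The paper does not establish early on that $\|\nabla(\eta u_k)\|_{L^\infty(B_1)}=o(M_k)$; it only has the bound $\|\nabla(\eta u_k)\|_{L^\infty(B_1)}\le M_k$ (from the seminorm and compact support). As a consequence, the paper (i) proves $r_k\to 0$ by a separate compactness contradiction -- assuming $r_k\to\overline r>0$ and deriving that $v$ would be affine on $\overline{B_2}$, contradicting the non-constancy of $\nabla v$ -- and (ii) handles the troublesome term $T^3_k$ (coming from subtracting the affine part) via a two-stage bootstrap: it first proves the theorem for suboptimal exponents $\alpha<\alpha'$ using $r_k^{\alpha'-\alpha}\to0$, and only then, having the a priori gradient bound $|\nabla u_k(x_k)|\lesssim I_k$, closes the argument at the optimal exponent $\alpha=\alpha'$. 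Your use of the elementary interpolation inequality $\|\nabla g\|_{L^\infty}\le\varepsilon[\nabla g]_{C^{0,\alpha}}+C\varepsilon^{-1/\alpha}\|g\|_{L^\infty}$, combined with $\|\eta u_k\|_{L^\infty}\lesssim I_k\lesssim k^{-1}M_k$, yields the sharper bound $\|\nabla(\eta u_k)\|_{L^\infty(B_1)}\le\sigma_k M_k$ with $\sigma_k\to0$, and this single improvement simultaneously forces $r_k\to0$ directly (from $M_k/2\le 2r_k^{-\alpha}\sigma_k M_k$) and shows that $|\eta(x_k)\nabla u_k(x_k)|=|\nabla(\eta u_k)(x_k)-u_k(x_k)\nabla\eta(x_k)|=o(M_k)$, which makes $T^3_k\to0$ without any bootstrap. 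This is a cleaner route. The remaining ingredients -- the choice of blow-up sequences $v_k$ (subtracting the first-order Taylor polynomial of $\eta u_k$, slightly different from the paper's normalization but serving the same purpose and giving $[\nabla v_k]_{C^{0,\alpha}}\le1$ exactly), the verification that $v_k-w_k\to0$ uniformly on compacts using the a priori $C^{0,\beta}$ bound from Theorem \ref{t:C0alphaPriori}, the Caccioppoli-based $H^1_{\loc}$ compactness of $w_k$, the passage to the limiting constant-coefficient equation, and the contradiction with Theorem \ref{t:liouville} at growth $1+\alpha<2$ -- all match the paper's strategy and are carried out correctly.
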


\begin{proof}[Proof of Theorem \ref{t:C1alphaPriori}]
Wlog we take $r=1/100$ and $R=1$. By reasoning as in Remark \ref{r:scaling-covering} this will easily give an estimate on bigger balls $B_r$ with $1/100<r<1$.\\

\noindent{\bf Step 1: the contradiction argument.}
Let us suppose by contradiction the existence of sequences of data $A_k,f_k,F_k$ and of associated solutions $u_k$ such that
\begin{equation*}
\|u_k\|_{C^{1,\alpha}(B_{r})}> k(\|u_k\|_{L^2(B_1)}+\|f_k\|_{L^p(B_1)}+\|F_k\|_{C^{0,\alpha}(B_1)}):=kI_k,
\end{equation*}
with $p,\alpha$ as in the statement and $A_k$ have the same uniform ellipticity constants $\lambda,\Lambda,L$ in $B_1$ and the same common bound
\begin{equation*}
\|A_k\|_{C^{0,\alpha}(B_1)}\leq \overline L.
\end{equation*}
By Theorem \ref{t:C0alphaPriori} we know that for any $\beta\in(0,1)$ and any $s\in(0,1)$ there exists a constant $c(s,\beta)>0$ (possibly exploding as $s\to1^-$ or $\beta\to1^-$) such that
\begin{equation}\label{unifCbeta}
\|u_k\|_{C^{0,\beta}(B_s)}\leq c(s,\beta)I_k.
\end{equation}
Notice that $p>n$ implies that $2-n/p>1$. The latter estimate in particular comprehends the $L^\infty$ bound given by Theorem \ref{t:Linfty}. Notice also that by definition of supremum, there exists a sequence of points $\zeta_k\in \overline{B_r}$ such that
\begin{equation*}
|\nabla u_k(\zeta_k)|\geq\frac{1}{2}\|\nabla u_k\|_{L^\infty(B_r)}.
\end{equation*}
Hence
\begin{eqnarray*}
|\nabla u_k(\zeta_k)|^2&=&\frac{1}{|B_r|}\int_{B_r}|\nabla u_k(\zeta_k)|^2\\
&\leq& c\left(\int_{B_r}|\nabla u_k(\zeta_k)-\nabla u_k(x)|^2+\int_{B_r}|\nabla u_k|^2\right)\\
&\leq& c\left([\nabla u_k]_{C^{0,\alpha}(B_r)}^2+I_k^2\right),
\end{eqnarray*}
which implies
\begin{equation*}
\|\nabla u_k\|_{L^\infty(B_r)}\leq c([\nabla u_k]_{C^{0,\alpha}(B_r)}+I_k).
\end{equation*}
Hence, the previous inequality together with the $L^\infty$ bound of the $u_k$s give the existence of a positive constant such that
\begin{equation*}
\|u_k\|_{C^{1,\alpha}(B_r)}\leq c([\nabla u_k]_{C^{0,\alpha}(B_r)}+I_k);
\end{equation*}
that is, for $k$ big enough
\begin{equation*}
[\nabla u_k]_{C^{0,\alpha}(B_r)}\geq ckI_k.
\end{equation*}
Let us consider a radially decreasing cut-off function $\eta\in C^\infty_c(B_{2r})$ with $\eta\equiv1$ in $B_{r}$ and $0\leq\eta\leq1$. Then
\begin{equation*}
M_k:=[\nabla(\eta u_k)]_{C^{0,\alpha}(B_{1})}\geq [\nabla u_k]_{C^{0,\alpha}(B_{r})}\geq ckI_k.
\end{equation*}
Let $x_k,y_k$ be two sequences of points such that
\begin{equation*}
\frac{|\nabla(\eta u_k)(x_k)-\nabla(\eta u_k)(y_k)|}{|x_k-y_k|^\alpha}\geq\frac{1}{2}M_k.
\end{equation*}
Let $r_k=|x_k-y_k|$. Reasoning as in the proof of Theorem \ref{t:C0alphaPriori}, we can assume wlog that $x_k,y_k\in B_{2r}$. At this point of the proof, we can not say that $r_k\to0$ but we just know that $0<r_k\leq 4r$.\\

\noindent{\bf Step 2: the blow-up sequences.} Let us define two blow-up sequences

\begin{equation*}
v_k(x)=\frac{\eta(x_k+r_kx)(u_k(x_k+r_kx)- u_k(x_k))-\eta(x_k)\nabla u_k(x_k)\cdot r_kx}{M_kr_k^{1+\alpha}},
\end{equation*}
\begin{equation*}
 w_k(x)=\frac{\eta(x_k)(u_k(x_k+r_kx)- u_k(x_k))-\eta(x_k)\nabla u_k(x_k)\cdot r_kx}{M_kr_k^{1+\alpha}}.
\end{equation*}
They are well defined as long as
$$x\in\Omega_k:=\frac{B_1-x_k}{r_k}.$$
Here, we can not infer directly that $\Omega_\infty=\R^n$ since we lack the information $r_k\to0$. The latter will be true after some more reasonings. First, by the choice $r=1/100$ we can infer that $\overline{B_2}\subset\Omega_k$ definitively for big $k$. Hence, we state some properties of the sequence $v_k$ on $\overline{B_R}$ for any $R>0$ such that $\overline{B_R}$ is contained in $\Omega_\infty$. Notice that at least for $0<R\leq2$ this is the case. Given $x,y\in \overline{B_R}\subset\Omega_k$ for $k$ big enough, then
\begin{eqnarray*}
|\nabla v_k(x)-\nabla v_k(y)|&=&\frac{1}{M_kr_k^\alpha}|\nabla (\eta u_k)(x_k+r_kx)-\nabla (\eta u_k)(x_k+r_ky)|\\
&&+\frac{|u_k(x_k)|}{M_kr_k^\alpha}|\nabla\eta(x_k+r_kx)-\nabla\eta(x_k+r_ky)|\\
&\leq&|x-y|^\alpha+\|u_k\|_{L^\infty(B_{2r})}\frac{r_k^{1-\alpha}}{M_k}\ell|x-y|\\
&\leq&|x-y|^\alpha+ck^{-1}(4r)^{1-\alpha}\ell R\leq 2|x-y|^\alpha.
\end{eqnarray*}
In the previous lines we used the Lipschitz continuity of $\nabla\eta$, the $L^\infty$ bound of the $u_k$s and we took $k$ big enough. In other words, fixed $R>0$ such that $\overline{B_R}\subset\Omega_k$ for $k\geq \overline k$, up to enlarge $\overline k$
\begin{equation*}
[\nabla v_k]_{C^{0,\alpha}(B_R)}\leq2.
\end{equation*}
This, together with $\nabla v_k(0)=0$ gives for $x\in \overline{B_R}$
$$|\nabla v_k(x)|\leq 2|x|^\alpha\leq 2R^\alpha;$$
that is
\begin{equation*}
\|\nabla v_k\|_{L^\infty(B_R)}\leq 2R^\alpha.
\end{equation*}
Moreover since $\overline{B_R}$ is convex, one can take $\gamma(t)=tx$ which is the segment connecting $0$ to $x\in \overline{B_R}$ for $t\in[0,1]$. Then, since $v_k(0)=0$
\begin{eqnarray*}
|v_k(x)|=\left|\int_0^1\nabla v_k(\gamma(t))\cdot\gamma'(t)dt\right|\leq R\|\nabla v_k\|_{L^\infty(B_R)}\leq 2R^{1+\alpha}.
\end{eqnarray*}
This gives a uniform bound for the $v_k$s in $C^{1,\alpha}(B_R)$, and by the Ascoli-Arzel\'a theorem we can infer $C^{1,\beta}(B_R)$ convergence for any $0<\beta<\alpha$, up to pass to a subsequence, to a limiting profile $v$. Notice that $0$ and the sequence $\xi_k=(y_k-x_k)/r_k\in\mathbb S^{n-1}$ both belong to the compact $\overline{B_R}$ for any $R\geq1$. In particular this is true for $\overline{B_2}$. Then, since
\begin{eqnarray*}
|\nabla v_k(0)-\nabla v_k(\xi_k)|&\geq&\frac{|\nabla(\eta u_k)(x_k)-\nabla(\eta u_k)(y_k)|}{M_kr_k^\alpha}-\frac{|u_k(x_k)|\cdot |\nabla\eta(x_k)-\nabla\eta(y_k)|}{M_kr_k^\alpha}\\
&\geq&\frac{1}{2}-c\ell r_k^{1-\alpha}k^{-1}>\frac{1}{4},
\end{eqnarray*}
we have
$$|\nabla v(0)-\nabla v(\xi)|>\frac{1}{4}$$
where $\xi_k\to\xi\in \mathbb S^{n-1}$; that is, $\nabla v$ is not constant (and consequently also $v$ is not constant) in $\overline{B_R}$ if $R\geq1$.\\

\noindent{\bf Step 3: the blow-up points are collapsing.}
We show that $r_k\to\overline r>0$ (along a subsequence) is not possible. In fact, if this is the case then
\begin{eqnarray*}
\left|v_k(x)+\frac{\eta(x_k)\nabla u_k(x_k)\cdot r_kx}{M_kr_k^{1+\alpha}}\right|&=&\frac{\eta(x_k+r_kx)| u_k(x_k+r_kx)-u_k(x_k)|}{M_kr_k^{1+\alpha}}\\
&\leq&\frac{2\|u_k\|_{L^\infty(B_{2r})}}{M_kr_k^{1+\alpha}}\leq ck^{-1}\to0.
\end{eqnarray*}
In the previous estimates we used the fact that $x_k+r_kx\in B_{2r}$ since otherwise $\eta(x_k+r_kx)=0$. Observe that by the definition of $M_k$ and since $\eta u_k\equiv0$ in $B_1\setminus B_{2r}$, then
\begin{equation*}
\|\nabla(\eta u_k)\|_{L^\infty(B_1)}\leq M_k,
\end{equation*}
and hence
\begin{equation}\label{boundgradient}
|\eta(x_k)\nabla u_k(x_k)|=|\nabla(\eta u_k)(x_k)-u_k(x_k)\nabla\eta(x_k)|\leq M_k+c\frac{M_k}{k}\leq 2M_k;
\end{equation}
that is,
\begin{equation*}
\left|\frac{\eta(x_k)\nabla u_k(x_k)}{M_kr_k^{\alpha}}\right|\leq c,\qquad \frac{\eta(x_k)\nabla u_k(x_k)}{M_kr_k^{\alpha}}\to b\in\R^n.
\end{equation*}
The latter convergence holds up to consider a further subsequence. This would give $v=-b\cdot x$ in $\overline{B_2}$, which is in contradiction with the fact that $\nabla v$ is not constant in $\overline{B_2}$. Hence we can conclude that $r_k\to0$. This information gives $\Omega_\infty=\R^n$ and that $B_R\subset\Omega_k$ definitively for any fixed $R>0$. Hence, summarizing the information previously obtained we have a limiting profile $v\in C^{1,\alpha}_\loc(\R^n)$, non constant with non constant gradient, having $v(0)=0$, $\nabla v(0)=0$ and with the subquadratic growth condition for any $x\in\R^n$
\begin{equation}\label{subquadratic}
|v(x)|=\left|\int_0^1\nabla v(tx)\cdot x \, dt\right|\leq \int_0^1|\nabla v(tx)-\nabla v(0)|\cdot |x| \, dt\leq \frac{2}{1+\alpha}|x|^{1+\alpha}.
\end{equation}

The two sequences converge to the same limit uniformly on compact sets. Let us fix $\beta\geq\alpha$ and $s\in(2r,1)$, recall \eqref{unifCbeta} and take $x\in \overline{B_R}\subset\R^n$. Then for $k$ large enough $x_k+r_kx\in B_s$ and
\begin{eqnarray*}
|v_k(x)-w_k(x)|&=&\frac{|\eta(x_k+r_kx)-\eta(x_k)|\cdot|u_k(x_k+r_kx)-u_k(x_k)|}{M_kr_k^{1+\alpha}}\\
&\leq&\frac{\ell c(s,\beta)r_k^{1+\beta}|x|^{1+\beta}}{kr_k^{1+\alpha}}\leq cR^{1+\beta}r_k^{\beta-\alpha}k^{-1}\to0.
\end{eqnarray*}
This says also that $w_k$ converges to the same limit $v$ uniformly on any $\overline{B_R}$.\\

\noindent{\bf Step 4: the limiting profile is entire "harmonic".}
Along a subsequence, $x_k,y_k\to \overline x$. Moreover, the equi-H\"older continuity of $A_k$s give their uniform convergence (up to subsequences) on compact sets of $\R^n$ to a constant coefficient uniformly elliptic matrix; that is,
\begin{equation*}
A_k(x_k+r_kx)\to \overline A=\overline A(\overline x).
\end{equation*}
Then, given a test function $\phi\in C^\infty_c(\R^n)$, its support will be contained in a possibly large ball; that is, $\mathrm{supp}\phi\subset B_R\subset\Omega_k$ definitively. Then
\begin{eqnarray*}\label{eqwk2}
\int_{B_R}A_k(x_k+r_kx)\nabla w_k(x)\cdot\nabla\phi(x)&=&\frac{r_k^{1-\alpha}\eta(x_k)}{M_k}\int_{B_R}f_k(x_k+r_kx)\phi(x)\nonumber\\
&&-\frac{r_k^{-\alpha}\eta(x_k)}{M_k}\int_{B_R}F_k(x_k+r_kx)\cdot\nabla\phi(x)\\
&&-\frac{r_k^{-\alpha}\eta(x_k)}{M_k}\int_{B_R}A_k(x_k+r_kx)\nabla u_k(x_k)\cdot\nabla\phi(x)\\
&=&T^1_k+T^2_k+T^3_k.
\end{eqnarray*}
Working as in the proof of Theorem \ref{t:C0alphaPriori},
\begin{equation*}
|T^1_k|\leq\frac{r_k^{1-\alpha-n}}{M_k}\|\phi\|_{L^\infty(\mathrm{supp}\phi)}\|f_k\|_{L^p(B_1)}|B_{r_kR}(x_k)|^{1-\frac{1}{p}}\leq c\|\phi\|_{L^\infty(\mathrm{supp}\phi)}r_k^{1-\frac{n}{p}-\alpha}k^{-1}\to0
\end{equation*}
since $\alpha\leq 1-n/p$. In order to estimate $T^2_k,T^3_k$ we do the same preliminary remark: given a constant vector $b\in\R^n$, and integrating by parts one has
\begin{equation*}
\int_{\mathrm{supp}\phi}b\cdot\nabla\phi=0.
\end{equation*}
Hence, taking $b=F_k(x_k)$,
\begin{eqnarray*}
\left|\int_{B_R}F_k(x_k+r_kx)\cdot\nabla\phi(x)dx\right|&=&\left|\int_{B_R}(F_k(x_k+r_kx)-F_k(x_k))\cdot\nabla\phi(x)dx\right|\\
&\leq& R^\alpha r_k^\alpha\|F_k\|_{C^{0,\alpha}(B_1)}\|\nabla\phi\|_{L^2(\mathrm{supp}\phi)}.
\end{eqnarray*}
Hence, since $\|F_k\|_{C^{0,\alpha}(B_1)}\leq cI_k\leq ck^{-1}M_k$,
\begin{equation*}
|T^2_k|\leq c\|\nabla\phi\|_{L^2(\mathrm{supp}\phi)}k^{-1}\to0.
\end{equation*}
Regarding $T^3_k$ we have to reason in two steps. First imagine that we are proving the present theorem in a weaker form; that is, given the $C^{0,\alpha'}$ regularity of $F,A$ for some $\alpha'\in (0,1)$ with $\alpha'\leq1-n/p$, we are proving $C^{1,\alpha}$ local regularity with $0<\alpha<\alpha'$. Taking $b=A_k(x_k)\nabla u_k(x_k)$ and reasoning as before we get
\begin{eqnarray*}
\left|\int_{B_R}A_k(x_k+r_kx)\nabla u_k(x_k)\cdot\nabla\phi(x)dx\right|&=&\left|\int_{B_R}(A_k(x_k+r_kx)-A_k(x_k))\nabla u_k(x_k)\cdot\nabla\phi(x)dx\right|\\
&\leq& R^{\alpha'} r_k^{\alpha'}\|A_k\|_{C^{0,\alpha'}(B_1)}|\nabla u_k(x_k)|\cdot\|\nabla\phi\|_{L^2(\mathrm{supp}\phi)}.
\end{eqnarray*}
Now, using that $\|A_k\|_{C^{0,\alpha'}(B_1)}\leq\overline L$, the fact that $|\eta(x_k)\nabla u_k(x_k)|\leq 2M_k$ by \eqref{boundgradient} and $\alpha'>\alpha$, then
\begin{equation*}
|T^3_k|\leq c\|\nabla\phi\|_{L^2(\mathrm{supp}\phi)}r_k^{\alpha'-\alpha}\to0.
\end{equation*}
Then, once the result is proved with the suboptimal requirement $\alpha<\alpha'$, this means that we have in particular the a priori local $L^\infty$-bound for the gradient of the solutions; that is,
\begin{equation}
|\nabla u_k(x_k)|\leq cI_k\leq c\frac{M_k}{k}.
\end{equation}
Then, reasoning as before we get
\begin{eqnarray*}
\left|\int_{B_R}A_k(x_k+r_kx)\nabla u_k(x_k)\cdot\nabla\phi(x)dx\right|&=&\left|\int_{B_R}(A_k(x_k+r_kx)-A_k(x_k))\nabla u_k(x_k)\cdot\nabla\phi(x)dx\right|\\
&\leq& R^{\alpha} r_k^{\alpha}\|A_k\|_{C^{0,\alpha}(B_1)}|\nabla u_k(x_k)|\cdot\|\nabla\phi\|_{L^2(\mathrm{supp}\phi)}.
\end{eqnarray*}
Now, using that $\|A_k\|_{C^{0,\alpha}(B_1)}\leq\overline L$ and the fact that $|\nabla u_k(x_k)|\leq ck^{-1}M_k$, then
\begin{equation*}
|T^3_k|\leq c\|\nabla\phi\|_{L^2(\mathrm{supp}\phi)}k^{-1}\to0.
\end{equation*}
This in particular means that there exists a sequence $\delta_k\to0$ such that
\begin{equation*}
\int_{\mathrm{supp}\phi}A_k(x_k+r_kx)\nabla w_k(x)\cdot\nabla\phi(x)\leq \delta_k(\|\phi\|_{L^\infty(\mathrm{supp}\phi)}+\|\nabla\phi\|_{L^2(\mathrm{supp}\phi)}).
\end{equation*}
Then, arguing as in the proof of Theorem \ref{t:C0alphaPriori}, we can conclude that, up to further subsequences, the convergence $w_k\to v$ is also weak in $H^1_\loc(\R^n)$ (i.e. on any compact of $\R^n$). Thus, $v\in H^1_\loc(\R^n)$. Moreover $v$ is entire $\overline A$-harmonic in $\R^n$; that is, for any $\phi\in C^\infty_c(\R^n)$
\begin{equation*}
\int_{\R^n}\overline A\nabla v\cdot\nabla\phi=0.
\end{equation*}

\noindent{\bf Step 5: the conclusion by the Liouville theorem.}
Summing up all the information obtained, we have a limiting profile $v$ which is $H^1_\loc(\R^n)$, and entire $\overline A$-harmonic. Moreover $v$ is not constant with non constant gradient since $|\nabla v(0)-\nabla v(\xi)|> 1/4$ and its growth is subquadratic by \eqref{subquadratic}. Then, this is in contradiction with the Liouville Theorem \ref{t:liouville}.
\end{proof}

\section{A posteriori $C^{0,\alpha}$ and $C^{1,\alpha}$ estimates, $C^{k,\alpha}$ estimates}\label{s:6}

\subsection{A posteriori $C^{0,\alpha}$ and $C^{1,\alpha}$ estimates}
In this section we prove that the regularity (together with the estimates) in the previous section is enjoyed a posteriori by any weak solution of \eqref{E} with the suitable assumptions on the data. This is done by a regularization-approximation scheme. The latter works as follows:

\begin{itemize}
\item[1)] {\bf Regularization:} one regularizes the data $A,f,F$ by convolution with a standard family of mollifiers depending on a small parameter $\varepsilon>0$;
\item[2)] {\bf Approximation:} one defines a family of $\varepsilon$-regularized problems with data $A_\varepsilon,f_\varepsilon,F_\varepsilon$. Given a particular solution $u$ of \eqref{E}, solving the associated Dirichlet problem for the $\varepsilon$-equation with boundary data $u$ prescribed on the boundary of a ball, one can imply suitable convergence of the unique solution $u_\varepsilon$ to $u$;
\item[3)] {\bf A posteriori estimates:} by Corollary \ref{c:smooth}, $u_\varepsilon$ are smooth and hence uniform in $\varepsilon$ estimates are available by Theorems \ref{t:C0alphaPriori} and \ref{t:C1alphaPriori} and pass to the limit $u$, finally providing
\end{itemize}

\begin{Theorem}[A posteriori $C^{0,\alpha}$ estimates]\label{t:C0alphaPosteriori}
Let $p>n/2$, $q>n$, $0<r<R$. Let $\alpha\in(0,1)$ such that
\begin{equation*}
\alpha\leq\min\{2-n/p,1-n/q\}.
\end{equation*}
Let $A\in C^{0,\omega(\cdot)}(B_R)$ with $\|A\|_{C^{0,\omega(\cdot)}(B_R)}\leq\overline L$ and $\omega$ is any given modulus of continuity.
Then, there exists a constant $C>0$ depending only on $n,p,q,\alpha,r,R$, the ellipticity constants in $B_R$ and $\overline L$ such that
\begin{equation*}
\|u\|_{C^{0,\alpha}(B_{r})}\leq C(\|u\|_{L^2(B_R)}+\|f\|_{L^p(B_R)}+\|F\|_{L^q(B_R)})
\end{equation*}
for any weak solution of \eqref{E} in $B_R$.
\end{Theorem}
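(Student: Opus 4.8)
The plan is to carry out the regularization--approximation scheme announced before the statement: replace the rough data by mollified ones, solve the corresponding Dirichlet problems on a slightly smaller ball, apply the a priori estimate of Theorem \ref{t:C0alphaPriori} to the resulting smooth solutions with constants uniform in the mollification parameter, and pass to the limit. Concretely, I would fix an intermediate radius $r<R'<R$ and, for $\varepsilon<R-R'$, mollify: with $\rho_\varepsilon\geq0$ supported in $B_\varepsilon$ and $\int\rho_\varepsilon=1$, set $A_\varepsilon=A*\rho_\varepsilon$, $f_\varepsilon=f*\rho_\varepsilon$, $F_\varepsilon=F*\rho_\varepsilon$ on $B_{R'}$, which are smooth. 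The crucial observations are that $A_\varepsilon$ inherits the same ellipticity constants $\lambda,\Lambda,L$ as $A$ (convolving a quadratic form with a probability density preserves \eqref{UE} and \eqref{UE2}), that
\begin{equation*}
|A_\varepsilon(x)-A_\varepsilon(y)|\leq\sup_z|A(x-z)-A(y-z)|\leq\|A\|_{C^{0,\omega(\cdot)}(B_R)}\,\omega(|x-y|),
\end{equation*}
so $\|A_\varepsilon\|_{C^{0,\omega(\cdot)}(B_{R'})}\leq\overline L$ uniformly in $\varepsilon$, and that $\|f_\varepsilon\|_{L^p(B_{R'})}\leq\|f\|_{L^p(B_R)}$, $\|F_\varepsilon\|_{L^q(B_{R'})}\leq\|F\|_{L^q(B_R)}$ by Young's convolution inequality, while $A_\varepsilon\to A$ uniformly on $\overline{B_{R'}}$, $f_\varepsilon\to f$ in $L^p(B_{R'})$ and $F_\varepsilon\to F$ in $L^q(B_{R'})$.

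Next I would introduce, by Lax--Milgram, the unique $u_\varepsilon\in H^1(B_{R'})$ with $u_\varepsilon-u\in H^1_0(B_{R'})$ solving $-\mathrm{div}(A_\varepsilon\nabla u_\varepsilon)=f_\varepsilon+\mathrm{div}F_\varepsilon$ in $B_{R'}$, and show $u_\varepsilon\to u$ in $H^1(B_{R'})$. Subtracting the weak formulations for $u_\varepsilon$ and for $u$ and testing against $w_\varepsilon:=u_\varepsilon-u\in H^1_0(B_{R'})$ gives
\begin{equation*}
\int_{B_{R'}}A_\varepsilon\nabla w_\varepsilon\cdot\nabla w_\varepsilon=\int_{B_{R'}}(A-A_\varepsilon)\nabla u\cdot\nabla w_\varepsilon+\int_{B_{R'}}(f_\varepsilon-f)w_\varepsilon-\int_{B_{R'}}(F_\varepsilon-F)\cdot\nabla w_\varepsilon,
\end{equation*}
and coercivity of $A_\varepsilon$ together with Hölder, the Sobolev embedding $H^1_0(B_{R'})\subset L^{2^*}$ and the Poincaré inequality (the hypotheses $p>n/2$, $q>n$ guarantee that $f$ lies in the dual of $L^{2^*}$ on $B_{R'}$ and that $F\in L^2(B_{R'})$) yield $\|\nabla w_\varepsilon\|_{L^2(B_{R'})}\to0$, hence $u_\varepsilon\to u$ in $L^2(B_{R'})$.

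Then the a posteriori estimate follows: by Corollary \ref{c:smooth} each $u_\varepsilon$ is smooth in the interior, so it belongs to $C^{0,\alpha}_\loc(B_{R'})$, and Theorem \ref{t:C0alphaPriori} applied on $B_{R'}$ gives
\begin{equation*}
\|u_\varepsilon\|_{C^{0,\alpha}(B_r)}\leq C\big(\|u_\varepsilon\|_{L^2(B_{R'})}+\|f\|_{L^p(B_R)}+\|F\|_{L^q(B_R)}\big),
\end{equation*}
with $C$ independent of $\varepsilon$, since the data $A_\varepsilon,f_\varepsilon,F_\varepsilon$ meet the hypotheses of that theorem with $\varepsilon$-independent constants. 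As $\|u_\varepsilon\|_{L^2(B_{R'})}\to\|u\|_{L^2(B_{R'})}\leq\|u\|_{L^2(B_R)}$, the right-hand side stays bounded, so by Ascoli--Arzelà a subsequence of $\{u_\varepsilon\}$ converges in $C^{0,\beta}(B_r)$ for every $\beta<\alpha$ to some $\tilde u\in C^{0,\alpha}(B_r)$ obeying the same bound by lower semicontinuity of the $C^{0,\alpha}(B_r)$ norm; and $u_\varepsilon\to u$ in $L^2(B_{R'})$ forces $\tilde u=u$ a.e. Fixing $R'=(r+R)/2$ converts the dependence on $R'$ into one on $r,R$ and yields the asserted estimate for the weak solution $u$ itself.

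The hard part is essentially bookkeeping the uniformity in $\varepsilon$: one must ensure that mollification does not degrade the ellipticity constants or the modulus-of-continuity bound $\overline L$ for $A$, and does not enlarge the $L^p,L^q$ norms of $f,F$, so that the constant furnished by Theorem \ref{t:C0alphaPriori} does not blow up along $\varepsilon\to0$. Everything else --- the energy estimate for $w_\varepsilon$, the compactness of $\{u_\varepsilon\}$ in $C^{0,\alpha}(B_r)$, and the identification of the limit with $u$ --- is routine.
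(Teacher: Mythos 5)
Your proposal is correct and follows essentially the same three-step regularization--approximation scheme as the paper: mollify the data with $\varepsilon$-independent control on ellipticity, modulus of continuity and $L^p/L^q$ norms, solve the regularized Dirichlet problem by Lax--Milgram, apply the a priori estimate of Theorem \ref{t:C0alphaPriori} to the smooth solutions $u_\varepsilon$, and pass to the limit via Ascoli--Arzel\`a. The only noteworthy variation is in the approximation step, where you establish $u_\varepsilon\to u$ strongly in $H^1$ directly from the energy identity obtained by testing the difference of the two weak formulations against $w_\varepsilon=u_\varepsilon-u$ and invoking coercivity, which is a cleaner, more quantitative route than the paper's argument of extracting a weak limit, identifying it as the zero solution of the homogeneous Dirichlet problem, and then upgrading to strong convergence.
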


\begin{Theorem}[A posteriori $C^{1,\alpha}$ estimates]\label{t:C1alphaPosteriori}
Let $p>n$, $0<r<R$. Let $\alpha\in(0,1)$ such that
\begin{equation*}
\alpha\leq 1-n/p.
\end{equation*}
Let $A\in C^{0,\alpha}(B_R)$ with $\|A\|_{C^{0,\alpha}(B_R)}\leq\overline L$.
Then, there exists a constant $C>0$ depending only on $n,p,\alpha,r,R$, the ellipticity constants in $B_R$ and $\overline L$ such that
\begin{equation*}
\|u\|_{C^{1,\alpha}(B_{r})}\leq C(\|u\|_{L^2(B_R)}+\|f\|_{L^p(B_R)}+\|F\|_{C^{0,\alpha}(B_R)})
\end{equation*}
for any weak solution of \eqref{E} in $B_R$.
\end{Theorem}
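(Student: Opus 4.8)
The plan is to carry out the regularization--approximation scheme sketched above, feeding the smooth approximate solutions into the a priori estimate of Theorem \ref{t:C1alphaPriori}. First I would fix radii $r<\rho_1<\rho_2<R$, choose a standard family of mollifiers $(\rho_\varepsilon)_{\varepsilon>0}$ (nonnegative, $\int\rho_\varepsilon=1$, supported in $B_\varepsilon$), and for $\varepsilon<R-\rho_2$ set $A_\varepsilon=A*\rho_\varepsilon$, $f_\varepsilon=f*\rho_\varepsilon$, $F_\varepsilon=F*\rho_\varepsilon$ on $B_{\rho_2}$. The key structural facts to record are: $A_\varepsilon\in C^\infty(\overline{B_{\rho_1}})$ retains the same ellipticity constants $\lambda,\Lambda$ (ellipticity passes through the averaging), $\|A_\varepsilon\|_{L^\infty(B_{\rho_2})}\le L$, and $\|A_\varepsilon\|_{C^{0,\alpha}(B_{\rho_1})}\le\|A\|_{C^{0,\alpha}(B_R)}\le\overline L$ (mollification does not increase the $C^{0,\alpha}$ norm); moreover $A_\varepsilon\to A$ uniformly on $\overline{B_{\rho_2}}$, $f_\varepsilon\to f$ in $L^p(B_{\rho_2})$ and hence, since $p>n\ge2$, also in $L^2$, $F_\varepsilon\to F$ uniformly on $\overline{B_{\rho_2}}$, and $\|F_\varepsilon\|_{C^{0,\alpha}(B_{\rho_1})}\le\|F\|_{C^{0,\alpha}(B_R)}$.

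Next, given a weak solution $u\in H^1(B_R)$ of \eqref{E}, I would let $u_\varepsilon$ be the unique function with $u_\varepsilon-u\in H^1_0(B_{\rho_2})$ solving $-\mathrm{div}(A_\varepsilon\nabla u_\varepsilon)=f_\varepsilon+\mathrm{div}F_\varepsilon$ in $B_{\rho_2}$ (standard existence theory via Lax--Milgram). Subtracting the equations for $u_\varepsilon$ and $u$ and testing with $u_\varepsilon-u\in H^1_0(B_{\rho_2})$ gives
\begin{equation*}
\int_{B_{\rho_2}}A_\varepsilon\nabla(u_\varepsilon-u)\cdot\nabla(u_\varepsilon-u)=\int_{B_{\rho_2}}(A-A_\varepsilon)\nabla u\cdot\nabla(u_\varepsilon-u)+\int_{B_{\rho_2}}(f_\varepsilon-f)(u_\varepsilon-u)-\int_{B_{\rho_2}}(F_\varepsilon-F)\cdot\nabla(u_\varepsilon-u),
\end{equation*}
and using ellipticity on the left, Young's and Poincar\'e's inequalities on the right, together with the convergences above and $\|\nabla u\|_{L^2(B_{\rho_2})}<\infty$, one absorbs the gradient terms and obtains $\|u_\varepsilon-u\|_{H^1(B_{\rho_2})}\to0$; in particular $u_\varepsilon\to u$ in $L^2(B_{\rho_1})$.

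Then, since $A_\varepsilon,f_\varepsilon,F_\varepsilon$ are smooth, Corollary \ref{c:smooth} gives $u_\varepsilon\in C^\infty_\loc(B_{\rho_2})$, hence in particular $u_\varepsilon\in C^{1,\alpha}_\loc(B_{\rho_1})$, so Theorem \ref{t:C1alphaPriori} applies on $B_r\subset B_{\rho_1}$:
\begin{equation*}
\|u_\varepsilon\|_{C^{1,\alpha}(B_r)}\le C\bigl(\|u_\varepsilon\|_{L^2(B_{\rho_1})}+\|f_\varepsilon\|_{L^p(B_{\rho_1})}+\|F_\varepsilon\|_{C^{0,\alpha}(B_{\rho_1})}\bigr).
\end{equation*}
The decisive point is that $C$ depends only on $n,p,\alpha,r,\rho_1$, the ellipticity constants and $\overline L$, so by the uniform bounds on $A_\varepsilon$ recorded above it is independent of $\varepsilon$. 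Since the right-hand side is bounded, as $\varepsilon\to0$, by $C'\bigl(\|u\|_{L^2(B_R)}+\|f\|_{L^p(B_R)}+\|F\|_{C^{0,\alpha}(B_R)}\bigr)$, the family $(u_\varepsilon)$ is uniformly bounded in $C^{1,\alpha}(B_r)$; by Arzel\`a--Ascoli a subsequence converges in $C^{1,\beta}(B_r)$ for every $\beta<\alpha$, and the limit equals $u$ by the $L^2$ convergence. Lower semicontinuity of the $C^{1,\alpha}$-seminorm under this convergence then gives $u\in C^{1,\alpha}(B_r)$ with the stated bound, and the passage from a fixed small $r$ to a general $r\in(0,R)$ is handled by the scaling and covering argument of Remark \ref{r:scaling-covering}.

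I expect the main obstacle to be the uniformity of the constant in the a priori estimate: the whole argument rests on the observation that convolution with a probability mollifier does not worsen the ellipticity constants, the $L^\infty$ bound, or the $C^{0,\alpha}$ norm of $A$, which is precisely what allows Theorem \ref{t:C1alphaPriori} to be invoked with an $\varepsilon$-independent constant. A more technical secondary point is checking that every error term in the energy identity genuinely vanishes, which is where the integrability of the data ($f\in L^p$ with $L^p\hookrightarrow L^2$ locally, $F\in C^{0,\alpha}\hookrightarrow L^2$) is used. The proof of Theorem \ref{t:C0alphaPosteriori} is entirely analogous, replacing Theorem \ref{t:C1alphaPriori} by Theorem \ref{t:C0alphaPriori} and using $f_\varepsilon\to f$ in $L^p$ and $F_\varepsilon\to F$ in $L^q$ (the latter holds since $F\in L^q$ and $F_\varepsilon$ is its mollification).
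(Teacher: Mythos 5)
Your proposal is correct and follows the paper's own regularization--approximation scheme essentially step for step: mollify the data, solve the regularized Dirichlet problem with boundary datum $u$, pass the $\varepsilon$-uniform a priori estimate of Theorem \ref{t:C1alphaPriori} (valid because mollification preserves the ellipticity constants and does not increase the $C^{0,\alpha}$ norm of $A$) to the limit via Arzel\`a--Ascoli. The one place where you deviate is the approximation step: you obtain $\|u_\varepsilon-u\|_{H^1}\to0$ directly by subtracting the two weak formulations, testing with $u_\varepsilon-u$, and exploiting the smallness of $A_\varepsilon-A$, $f_\varepsilon-f$, $F_\varepsilon-F$, whereas the paper first establishes only a uniform $H^1_0$ bound on $w_\varepsilon=u_\varepsilon-u$ via a dual-norm estimate, extracts a weak limit, identifies it as the zero solution of the limit equation (using that $u$ solves \eqref{E}), and then upgrades to strong convergence by testing the $w_\varepsilon$-equation with $w_\varepsilon$ itself; your one-shot energy comparison is a modest but genuine simplification that reaches the same conclusion.
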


\begin{proof}[Proof of Theorem \ref{t:C0alphaPosteriori}]
The proof can be divided into three main steps.\\

\noindent {\bf Step 1: Regularization.} Let us define a standard family of mollifiers: given $\eta\in C^\infty_c(\R^n)$ with $\int_{\R^n}\eta=1$ and $\mathrm{supp}\eta=B_1$, $\eta$ radially decreasing and $\eta\geq0$. Then, given $\varepsilon>0$, one defines
\begin{equation*}
\eta_\varepsilon(x)=\frac{1}{\varepsilon^{n}}\eta\left(\frac{x}{\varepsilon}\right).
\end{equation*}
so that $\mathrm{supp}\eta_\varepsilon=\varepsilon\mathrm{supp}\eta=B_\varepsilon$ and $\int_{\R^n}\eta_\varepsilon=1$. Let us define the regularized data by convolution with the mollifiers $a_{ij}^\varepsilon=a_{ij}*\eta_\varepsilon$ (so that $A_\varepsilon=(a_{ij}^\varepsilon)$), $f_\varepsilon=f*\eta_\varepsilon$ and $F_\varepsilon=F*\eta_\varepsilon$. These new data are well defined in $B_R$ for any $0<R<1$ provided that $0<\varepsilon\leq\overline\varepsilon(R)$ and they are smooth in $\overline{B_R}$. Moreover, if the original data $g\in L^p(B_1)$, then $\|g_\varepsilon\|_{L^p(B_R)}\leq \|g\|_{L^p(B_1)}$ and $\|g_\varepsilon-g\|_{L^p(B_R)}\to0$. Moreover, if the original data $g\in C^{0,\omega(\cdot)}(B_1)$ for some modulus of continuity $\omega$, then $\|g_\varepsilon\|_{C^{0,\omega(\cdot)}(B_R)}\leq\|g\|_{C^{0,\omega(\cdot)}(B_1)}$ and $\|g_\varepsilon-g\|_{L^\infty(B_R)}\to0$.\\

\noindent {\bf Step 2: Approximation.}
Given $u\in H^1(B_1)$ a weak solution to \eqref{E} in $B_1$, let us define the problem
\begin{equation}\label{equeps}
\begin{cases}
-\mathrm{div}(A_\varepsilon\nabla v)=f_\varepsilon+\mathrm{div}F_\varepsilon & \mathrm{in \ }B_{3/4}\\
v=u & \mathrm{on \ }\partial B_{3/4}.
\end{cases}
\end{equation}
Then $v\in H^1(B_{3/4})$ is a solution of the above Dirichlet problem if and only if $w=v-u\in H^{1}_0(B_{3/4})$ is a solution to
\begin{equation}\label{eqweps}
\begin{cases}
-\mathrm{div}(A_\varepsilon\nabla w)=f_\varepsilon+\mathrm{div}F_\varepsilon +\mathrm{div}(A_\varepsilon\nabla u) & \mathrm{in \ }B_{3/4}\\
w=0 & \mathrm{on \ }\partial B_{3/4}.
\end{cases}
\end{equation}
By the Lax-Milgram theorem, fixed $\varepsilon>0$ there exists unique solution $w_\varepsilon\in H^{1}_0(B_{3/4})$ to \eqref{eqweps}. This is true since
\begin{equation*}
\langle w,\phi\rangle_{H^1_0(B_{3/4})}^\varepsilon:=\int_{B_{3/4}}A_\varepsilon\nabla w\cdot\nabla\phi
\end{equation*}
defines a bilinear form in the Hilbert space $H^1_0(B_{3/4})$ which is coercive and continuous. Moreover
\begin{equation*}
L_\varepsilon(\phi):=\int_{B_{3/4}}f_\varepsilon\phi-(F_\varepsilon+A_\varepsilon\nabla u)\cdot\nabla\phi
\end{equation*}
is a linear and continuous functional. Hence
\begin{eqnarray*}
\|w_\varepsilon\|_{H^1_0(B_{3/4})}&\leq&\|L_\varepsilon\|_{(H^1_0(B_{3/4}))^*}=\sup_{\|\phi\|_{H^1_0(B_{3/4})}=1}|L_\varepsilon(\phi)|\\
&\leq& \tilde c(\|f_\varepsilon\|_{L^p(B_{3/4})}+\|F_\varepsilon\|_{L^q(B_{3/4})}+\|\nabla u\|_{L^2(B_{3/4})})\\
&\leq& c (\|u\|_{L^2(B_{1})}+\|f\|_{L^p(B_{1})}+\|F\|_{L^q(B_{1})}),
\end{eqnarray*}
where $\tilde c>0$ depends on $\|A_\varepsilon\|_{L^\infty(B_{3/4})}$ and $c>0$ depends on $\|A\|_{L^\infty(B_{1})}\leq L$. Then, there exists $w\in H^1_0(B_{3/4})$ such that, up to subsequences, $w_\varepsilon$ weakly converges to $w$. Then, it is easy to see that the equations for $w_\varepsilon$ pass to the limit giving that $w$ is the unique solution to $-\mathrm{div}(A\nabla w)=0$ in $H^{1}_0(B_{3/4})$. This implies that $w\equiv0$. Then, testing the equation of the $w_\varepsilon$ with $w_\varepsilon$ itself and passing to the limit one can infer that the convergence $w_\varepsilon\to w\equiv0$ is also strong in $H^1_0(B_{3/4})$. Then, the unique solution $u_\varepsilon=w_\varepsilon+u$ to \eqref{equeps} strongly converges in $H^1(B_{3/4})$ to $u$ and $\|u_\varepsilon\|_{L^2(B_{3/4})}\leq \|w_\varepsilon\|_{L^2(B_{3/4})}+\|u\|_{L^2(B_{3/4})}\leq 2\|u\|_{L^2(B_{1})}$.\\

\noindent {\bf Step 3: A posteriori estimates.} Thanks to Corollary \ref{c:smooth} we have that $u_\varepsilon\in C^\infty(B_{3/4})$. Hence, we can apply on this family of regularized solutions the a priori estimates in Theorem \ref{t:C0alphaPriori}; that is, there exists a constant $c>0$ not depending on $\varepsilon>0$ such that
\begin{eqnarray*}
\|u_\varepsilon\|_{C^{0,\alpha}(B_{2/3})}&\leq& c(\|u_\varepsilon\|_{L^2(B_{3/4})}+\|f_\varepsilon\|_{L^p(B_{3/4})}+\|F_\varepsilon\|_{L^q(B_{3/4})})\\
&\leq& c(\|u\|_{L^2(B_{1})}+\|f\|_{L^p(B_{1})}+\|F\|_{L^q(B_{1})}).
\end{eqnarray*}
The uniform bound in $C^{0,\alpha}(B_{2/3})$ allows to have uniform convergence $u_\varepsilon\to u$ in $\overline{B_{2/3}}$ by the Ascoli-Arzel\'a theorem, giving in particular that for $x,y\in B_{1/2}$ with $x\neq y$
$$|u_\varepsilon(x)|+\frac{|u_\varepsilon(x)-u_\varepsilon(y)|}{|x-y|^\alpha}\to |u(x)|+\frac{|u(x)-u(y)|}{|x-y|^\alpha}.$$
Then
$$|u(x)|+\frac{|u(x)-u(y)|}{|x-y|^\alpha}\leq c (\|u\|_{L^2(B_{1})}+\|f\|_{L^p(B_{1})}+\|F\|_{L^q(B_{1})}),$$
and passing to the supremum we get the estimate.
\end{proof}

\begin{proof}[Proof of Theorem \ref{t:C1alphaPosteriori}]
The {\bf Steps 1,2} are as in the proof of the previous result (just changing the norm of the field term which is now in $C^{0,\alpha}$).\\

\noindent {\bf Step 3: A posteriori estimates.} Thanks to Corollary \ref{c:smooth} we have that $u_\varepsilon\in C^\infty(B_{3/4})$. Hence, we can apply on this family of regularized solutions the a priori estimates in Theorem \ref{t:C1alphaPriori}; that is, there exists a constant $c>0$ not depending on $\varepsilon>0$ such that
\begin{eqnarray*}
\|u_\varepsilon\|_{C^{1,\alpha}(B_{2/3})}&\leq& c(\|u_\varepsilon\|_{L^2(B_{3/4})}+\|f_\varepsilon\|_{L^p(B_{3/4})}+\|F_\varepsilon\|_{C^{0,\alpha}(B_{3/4})})\\
&\leq& c(\|u\|_{L^2(B_{1})}+\|f\|_{L^p(B_{1})}+\|F\|_{C^{0,\alpha}(B_{1})}).
\end{eqnarray*}
The uniform bound in $C^{1,\alpha}(B_{2/3})$ allows to have uniform convergence $u_\varepsilon\to u$ and $\nabla u_\varepsilon\to\nabla u$ in $\overline{B_{2/3}}$ by the Ascoli-Arzel\'a theorem, giving in particular that for $x,y\in B_{1/2}$ with $x\neq y$
$$|u_\varepsilon(x)|+|\nabla u_\varepsilon(x)|+\frac{|\nabla u_\varepsilon(x)-\nabla u_\varepsilon(y)|}{|x-y|^\alpha}\to |u(x)|+|\nabla u(x)|+\frac{|\nabla u(x)-\nabla u(y)|}{|x-y|^\alpha}.$$
Then
$$|u(x)|+|\nabla u(x)|+\frac{|\nabla u(x)-\nabla u(y)|}{|x-y|^\alpha}\leq c (\|u\|_{L^2(B_{1})}+\|f\|_{L^p(B_{1})}+\|F\|_{C^{0,\alpha}(B_{1})}),$$
and passing to the supremum we get the estimate.
\end{proof}

\subsection{$C^{k,\alpha}$ estimates}
The $C^{1,\alpha}$ estimate can be iterated on partial derivatives, and this implies

\begin{Theorem}[$C^{k,\alpha}$ estimates]\label{t:Ckalpha}
Let $\alpha\in(0,1)$, $k\geq2$, $0<r<R$. Let $A\in C^{k-1,\alpha}(B_R)$ with $\|A\|_{C^{k-1,\alpha}(B_R)}\leq\overline L$.
Then, there exists a constant $C>0$ depending only on $n,\alpha,k,r,R$, the ellipticity constants in $B_R$ and $\overline L$ such that
\begin{equation*}
\|u\|_{C^{k,\alpha}(B_{r})}\leq C(\|u\|_{L^2(B_R)}+\|f\|_{C^{k-2,\alpha}(B_R)}+\|F\|_{C^{k-1,\alpha}(B_R)})
\end{equation*}
for any weak solution of \eqref{E} in $B_R$.
\end{Theorem}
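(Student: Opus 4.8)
The plan is to argue by induction on $k$, differentiating the equation and feeding it back into the one-step-lower estimate, exactly in the spirit of the iteration of the $H^k$ estimates in Section \ref{s:3}: since \eqref{E} already carries a field term, one derivative of the equation is again an equation of the same form, so the only real input needed is the a posteriori $C^{1,\alpha}$ estimate. Concretely, for $k\ge1$ consider the statement $(\star_k)$: \emph{if $A\in C^{k-1,\alpha}(B_R)$, $\Phi\in C^{k-1,\alpha}(B_R)^n$, $g\in C^{k-2,\alpha}(B_R)$ (with the convention that $C^{-1,\alpha}$ means $L^{p_0}$ for a fixed $p_0>n$ with $\alpha\le 1-n/p_0$), and $u\in H^1(B_R)$ solves $-\mathrm{div}(A\nabla u)=g+\mathrm{div}\,\Phi$ in $B_R$, then $u\in C^{k,\alpha}_\loc(B_R)$ and $\|u\|_{C^{k,\alpha}(B_r)}\le C(\|u\|_{L^2(B_R)}+\|g\|_{C^{k-2,\alpha}(B_R)}+\|\Phi\|_{C^{k-1,\alpha}(B_R)})$ for all $0<r<R$, with $C$ depending on $n,\alpha,k,p_0,r,R$, the ellipticity constants and $\overline L$, and blowing up as $R-r\to0$}. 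Then $(\star_1)$ is Theorem \ref{t:C1alphaPosteriori} (note $C^{0,\alpha}\subset C^{0,\omega(\cdot)}$ and $C^{0,\alpha}\subset L^{p_0}$, so its hypotheses hold), while $(\star_k)$ for $k\ge2$ is precisely Theorem \ref{t:Ckalpha} with $g=f$ and $\Phi=F$. By the scaling and covering argument of Remark \ref{r:scaling-covering} it suffices to carry out the inductive step with $R=1$, $r=1/2$.

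Assume $(\star_{k-1})$ and prove $(\star_k)$, $k\ge2$; let $A\in C^{k-1,\alpha}(B_1)$, $g\in C^{k-2,\alpha}(B_1)$, $\Phi\in C^{k-1,\alpha}(B_1)$. Since $k-1\ge1$, $A$ is Lipschitz, so (reasoning as in the proof of Theorem \ref{t:H2} and Corollary \ref{c:derivative}) for $j\in\{1,\dots,n\}$ and $|h|$ small the incremental quotient $D^h_ju$ solves, in a slightly smaller ball,
\begin{equation*}
-\mathrm{div}\big(A(\cdot+he_j)\nabla D^h_ju\big)=D^h_jg+\mathrm{div}\big(D^h_jA\,\nabla u+D^h_j\Phi\big).
\end{equation*}
The crucial point is to organize this as an equation to which $(\star_{k-1})$ applies with data uniformly bounded in $h$: one rewrites the differentiated forcing term as a field term via $D^h_jg=\mathrm{div}\big(e_j\,\tfrac1h\int_0^h g(\cdot+se_j)\,ds\big)$, an average of translates of $g$ which carries the same $C^{k-2,\alpha}$ bound as $g$; one uses $\|D^h_j\Phi\|_{C^{k-2,\alpha}}\le\|\partial_j\Phi\|_{C^{k-2,\alpha}}\le\|\Phi\|_{C^{k-1,\alpha}}$ and likewise $\|D^h_jA\|_{C^{k-2,\alpha}}\le\|A\|_{C^{k-1,\alpha}}\le\overline L$ (for a $C^{1,\alpha}$ function the difference quotients are bounded in $C^{0,\alpha}$ uniformly in $h$, and similarly for higher order); and one controls the product $D^h_jA\,\nabla u$ by the elementary multiplicative estimate $\|vw\|_{C^{m,\alpha}}\le C(m,n)\|v\|_{C^{m,\alpha}}\|w\|_{C^{m,\alpha}}$ on bounded domains, together with $\|\nabla u\|_{C^{k-2,\alpha}(B_{3/4})}\le\|u\|_{C^{k-1,\alpha}(B_{3/4})}$, which is finite by $(\star_{k-1})$ applied to $u$ itself (whose data lie a fortiori in $C^{k-2,\alpha},C^{k-3,\alpha},C^{k-2,\alpha}$). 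Applying $(\star_{k-1})$ to $D^h_ju$ on $B_{3/4}\supset B_{1/2}$, and bounding $\|D^h_ju\|_{L^2(B_{3/4})}\le\|\nabla u\|_{L^2(B_{7/8})}$ by Lemma \ref{l:difference} and the Caccioppoli inequality (Proposition \ref{p:caccioppoli}), gives
\begin{equation*}
\|D^h_ju\|_{C^{k-1,\alpha}(B_{1/2})}\le C\big(\|u\|_{L^2(B_1)}+\|g\|_{C^{k-2,\alpha}(B_1)}+\|\Phi\|_{C^{k-1,\alpha}(B_1)}\big)
\end{equation*}
uniformly in small $|h|$. Letting $h\to0$ (Arzel\`a--Ascoli on $D^h_ju$ and its derivatives up to order $k-1$, identifying the limit with $\partial_ju$ through the $L^2$-convergence of the quotients, and lower semicontinuity of the $C^{k-1,\alpha}$ norm) yields $\partial_ju\in C^{k-1,\alpha}(B_{1/2})$ with the same bound; as this holds for every $j$ and $u\in C^{k-1,\alpha}(B_{1/2})$ by $(\star_{k-1})$, we conclude $u\in C^{k,\alpha}(B_{1/2})$ with the estimate, closing the induction, and the passage to general $0<r<R$ is again Remark \ref{r:scaling-covering}.

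The routine ingredients here are the Banach-algebra property of $C^{m,\alpha}$ on bounded domains and the stability of these norms under averaging translates and under difference quotients. The single point deserving care is the one just described: arranging the differentiated equation so that \emph{all} its data --- including the differentiated forcing term, which need not even be a function when $f$ is merely $C^{0,\alpha}$ --- stay uniformly bounded in $h$ in norms that $(\star_{k-1})$ can absorb. This is exactly why the hypothesis requires one more derivative on $A$ than on $F$, namely $A\in C^{k-1,\alpha}$; everything else is bookkeeping of constants and of the nested shrinking radii.
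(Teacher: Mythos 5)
Your argument is correct and follows essentially the same route as the paper: induct on $k$, differentiate the equation in each coordinate, fold the forcing term into a field term (your $D^h_jg=\mathrm{div}\bigl(e_j\,\tfrac1h\int_0^h g(\cdot+se_j)\,ds\bigr)$ is the difference-quotient version of the paper's observation $\partial_i f=\mathrm{div}(fe_i)$), and apply the $C^{1,\alpha}$ a posteriori estimate to the differentiated equation. The only distinction is cosmetic: the paper directly differentiates using the already-known $C^{1,\beta}_\loc$ regularity of $u$, whereas you justify the differentiated equation via difference quotients and a limiting argument, which is slightly more self-contained but identical in spirit.
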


\begin{proof}
We reason by induction on $k\geq2$. Let us fix wlog $r=1/2$ and $R=1$. Let us assume $k=2$. In these conditions we already know by Theorem \ref{t:C1alphaPosteriori} that $u\in C^{1,\beta}_\loc(B_1)$ for any $\beta\in(0,1)$. Then $u_i=\partial_iu$ solves for any $0<r<1$
\begin{equation*}
-\mathrm{div}(A\nabla u_i)=\mathrm{div}(\partial_iA\nabla u+fe_i+\partial_iF)\qquad\mathrm{in \ }B_r.
\end{equation*}
Then, by Theorem \ref{t:C1alphaPosteriori} $u_i\in C^{1,\alpha}_\loc(B_r)$ with
\begin{eqnarray*}
\|u_i\|_{C^{1,\alpha}(B_{1/2})}&\leq& c(\|u_i\|_{L^2(B_{2/3})}+\|\partial_iA\nabla u+fe_i+\partial_iF\|_{C^{0,\alpha}(B_{2/3})})\\
&\leq& c(\|u\|_{L^2(B_{1})}+\|A\|_{C^{1,\alpha}(B_{1})}\|u\|_{C^{1,\alpha}(B_{2/3})}+\|f\|_{C^{0,\alpha}(B_{1})}+\|F\|_{C^{1,\alpha}(B_{1})}).
\end{eqnarray*}
Then, applying again the $C^{1,\alpha}$-estimate of $u$ from $B_1$ to $B_{2/3}$ in the last line we have the desired estimate. Then, supposing the result true for $k\geq2$ and proving it for $k+1$ follows the same kind of argument. Just notice that for any $k\in\mathbb N$ and $\alpha\in(0,1]$ one has $\|fg\|_{C^{k,\alpha}}\leq\|f\|_{C^{k,\alpha}}\|g\|_{C^{k,\alpha}}$.
\end{proof}

\section*{Acknowledgment}
I would like to express my gratitude to Gabriele Cora, Gabriele Fioravanti, Susanna Terracini, and Giorgio Tortone, whose contributions and collaboration over the past years have helped shape this work. I would also like to thank Timoth\'e Lemistre for the 2D examples in Remark \ref{r:superpolynomial}. Finally, I would like to thank Dario Mazzoleni, who taught this PhD course with me, and all the students who attended it. 

The author is research fellow of Istituto Nazionale di Alta Matematica INDAM group GNAMPA and supported by the GNAMPA project E5324001950001 \emph{PDE ellittiche che degenerano su variet\`a di dimensione bassa e frontiere libere molto sottili}. He is also supported by the PRIN project 2022R537CS \emph{$NO^3$ - Nodal Optimization, NOnlinear elliptic equations, NOnlocal geometric problems, with a focus on regularity}.


\end{document}